\documentclass[11pt,a4paper]{article}

\usepackage[T1]{fontenc}
\usepackage{lmodern}
\usepackage[utf8]{inputenc}

\usepackage[left=2.0cm, top=2.45cm,bottom=2.45cm,right=2.0cm]{geometry}
\usepackage{mathtools,amssymb,amsthm,mathrsfs,calc,graphicx,xcolor,dsfont,tikz,bm}
\usepackage[british]{babel}
\usepackage{amsfonts}              

\usepackage[
	bookmarks=true,
	bookmarksnumbered=true,
	bookmarksopen=true,
	unicode=true,
	pdftoolbar=true,
	pdfmenubar=true,
	pdffitwindow=false,
	pdfstartview={FitH},
	pdftitle={},
	pdfauthor={},
	pdfsubject={},
	pdfcreator={},
	pdfproducer={},
	pdfkeywords={},
	pdfnewwindow=true,
	colorlinks=true,
	linkcolor=black,
	citecolor=black,
	filecolor=black,
	urlcolor=black]{hyperref}

\usepackage{cleveref}

\usepackage[T1]{fontenc}
\usepackage[deletedmarkup=xout]{changes}
\usepackage{selinput}
\numberwithin{equation}{section}
\numberwithin{figure}{section}

\makeatletter
	\def\@cite#1#2{[\textbf{#1}\if@tempswa , #2\fi]}	
	\def\@biblabel#1{[#1]}								
\makeatother

\newtheorem {theorem}{Theorem}[section]

\newtheorem {lemma}[theorem]{Lemma}

\newtheorem {remark}[theorem]{Remark}

\newcommand{\dint}{\textup{d}}
\newcommand{\pos}{\operatorname{pos}}

 \def\EE{\mathbb{E}}
 \def\PP{\mathbb{P}}
 \def\RR{\mathbb{R}}
 \def\SS{\mathbb{S}}
 \def\bZ{\mathbf{Z}}

 \def\bn{\mathbf{n}}
 \def\bu{\mathbf{u}}
 \def\by{\mathbf{y}}
 \def\bz{\mathbf{z}}
 \def\bx{\mathbf{x}}
 \def\bw{\mathbf{w}}
 \def\bv{\mathbf{v}}
 \def\be{\mathbf{e}}
 \def\bp{\mathbf{p}}

\def\I{\gamma}

\newcommand{\lspan}{\operatorname{span}}
\newcommand{\proj}{\operatorname{proj}}

\setlength{\parindent}{0pt}

\makeatletter
\let\@fnsymbol\@alph
\makeatother

\begin{document}

\title{\bfseries Spherical convex hull of random points on a wedge}

\author{Florian Besau\footnotemark[1],\; Anna Gusakova\footnotemark[2],\; Matthias Reitzner\footnotemark[3],\\
\;Carsten Sch\"utt\footnotemark[4],\; Christoph Th\"ale\footnotemark[5],\; Elisabeth M.\ Werner\footnotemark[6]}

\renewcommand{\thefootnote}{\fnsymbol{footnote}}

\footnotetext[1]{
    Technische Universität Wien, Austria. Email: florian.besau@tuwien.ac.at
}
\footnotetext[2]{
    M{\"u}nster University, Germany. Email: gusakova@uni-muenster.de
}
\footnotetext[3]{
    University of Osnabr\"uck, Germany. Email: matthias.reitzner@uni-osnabrueck.de
}
\footnotetext[4]{%
    University of Kiel, Germany. Email: schuett@math.uni-kiel.de
}
\footnotetext[5]{%
    Ruhr University Bochum, Germany. Email: christoph.thaele@rub.de
}
\footnotetext[6]{%
    Case Western Reserve University, USA. Email: elisabeth.werner@case.edu
}

\date{}

\maketitle

\begin{abstract}
Consider two half-spaces $H_1^+$ and $H_2^+$ in $\mathbb{R}^{d+1}$ whose bounding hyperplanes $H_1$ and $H_2$ are orthogonal and pass through the origin. The intersection $\mathbb{S}_{2,+}^d:=\mathbb{S}^d\cap H_1^+\cap H_2^+$ is a spherical convex subset of the $d$-dimensional unit sphere $\SS^d$, which contains a great subsphere of dimension $d-2$ and is called a spherical wedge. Choose $n$ independent random points uniformly at random on $\mathbb{S}_{2,+}^d$ and consider the expected facet number of the spherical convex hull of these points. It is shown that, up to terms of lower order, this expectation grows like a constant multiple of $\log n$. A similar behaviour is obtained for the expected facet number of a homogeneous Poisson point process on $\mathbb{S}_{2,+}^d$. The result is compared to the corresponding behaviour of classical Euclidean random polytopes and of spherical random polytopes on a half-sphere.

\smallskip\noindent
    \textbf{Keywords.} Convex hull, expected facet number, Poisson point process, random polytope, spherical integral geometry, spherical stochastic geometry, spherical wedge.

    \smallskip\noindent
    \textbf{MSC 2010.} 52A22, 60D05.
\end{abstract}

\section{Introduction}

One of the classical ways to construct a random polytope is based on taking a fixed convex body $K\subset\mathbb{R}^d$ and a sequence $(X_i)_{i\geq 1}$ of independent random points uniformly distributed in $K$. For $n\geq d+1$ we denote by
\begin{equation*}
    K_n:=[X_1,\ldots,X_n]
\end{equation*}
the convex hull of the random points $X_1,\ldots,X_n$. We may consider it as a random polytopal approximation of the set $K$, which approaches $K$, as the number $n$ of random points tends to infinity. There are several interesting random variables connected to the random polytopes $K_n$, some of which can be seen as a measure of the degree of approximation of $K$ achieved by $K_n$, while others give information on the combinatorial complexity of $K_n$. Usually one of the following notions is considered:
\begin{itemize}
    \item[(i)] the volume $V_d(K_n)$ of $K_n$, and, more generally, the $k$-th intrinsic volume $V_k(K_n)$ of $K_n$, $k=\{0,1,\ldots,d\}$,
    \item[(ii)] the number $f_{d-1}(K_n)$ of facets, that is, the $(d-1)$-dimensional faces, of $K_n$, and, more generally, the number $f_k(K_n)$ of $k$-dimensional faces of $K_n$, $k=\{0,1,\ldots,d-1\}$.
\end{itemize}
From now on, we will concentrate on first-order properties of the combinatorial structure of $K_n$, more precisely, on the expected number $\mathbb{E}f_k(K_n)$ of $k$-dimensional faces of $K_n$. For further background material on random polytopes we refer to the survey articles \cite{BaranySurvey,HugSurvey,ReitznerSurvey}.

\smallskip

It is well known that the asymptotic behavior of $\mathbb{E}f_k(K_n)$, as $n\to\infty$, depends on the geometry of the underlying convex body $K$. Indeed, if $K$ is of class $C_+^2$, that is, if $K$ has a boundary which is a $C^2$-submanifold of $\mathbb{R}^d$ with strictly positive Gaussian curvature $\kappa(x)$ at every point $x\in\partial K$, then
\begin{equation}\label{eqn:smooth}
    \mathbb{E}f_k(K_n) = c_{d,k}\Omega(K)n^\frac{d-1}{d+1}(1+o_d(1)),
\end{equation}
as $n\to\infty$, where $c_{d,k}$ is a constant only depending on $d$ and on $k$ and $\Omega(K)=\int_{\partial K}\kappa(x)^{1/(d+1)}\,{\rm d}x$ is known to be the affine surface area of $K$, see e.g.\ \cite[Theorem 4]{R05}. Here and in the following we use the \emph{little-o notation} for our error term, that is, by $f(n)=o(g(n))$ we mean that $\lim_{n\to\infty} f(n) / g(n) = 0$, where we assume that $g(n)>0$. An analogue of \eqref{eqn:smooth} in the spherical space was derived in \cite{BLW18} using tools from \cite{BFH:2010}.

\smallskip
On the other hand, if $K=P$ is a $d$-dimensional polytope, then
\begin{equation}\label{eq:Polytopes}
\mathbb{E}f_k(K_n) = \hat c_{d,k}{\rm flag}(P)(\log n)^{d-1}(1+o_d(1)),
\end{equation}
as $n\to\infty$, where $\hat c_{d,k}$ is another constant only depending on $d$ and on $k$, while ${\rm flag}(P)$ is the number of flags of $P$, that is, the number of chains $F_0\subset F_1\subset\ldots\subset F_{d-1}$, where for each $i\in\{0,1,\ldots,d-1\}$, $F_i$ is an $i$-dimensional face of $P$, see e.g.\ \cite[Theorem 8]{R05}. On different levels of generality these results can be found in \cite{RS63} for $d=2$, \cite{BB93} for $k\in\{0,d-1\}$ and \cite{R05} for general $d$ and $k$. See also \cite{BSW18,Schutt:1991} for a limit theorem similar to \eqref{eq:Polytopes} in the context of (weighted) floating bodies.

\smallskip
Let us also mention another model, which is often considered together with $K_n$. Let $\eta_\I$ be a Poisson point process in $\RR^d$ with intensity measure given by a constant multiple $\I>0$ of the Lebesgue measure restricted to some convex body $K\subset\RR^d$. We define the Poisson random polytope $K_{\eta_\I}$ as a convex hull of the Poisson point process $\eta_\I$. We remark in this context that the expected number of points of $\eta_\I$ equals $\I V_d(K)$, meaning that for a Poisson random polytope the quantity $\I V_d(K)$ plays the same role as the number $n$ for the classical random polytopes described at the beginning. 

\medskip
To motivate our results, let us recall the following setup, which has been introduced in \cite{BHRS17}. We assume that $(X_i)_{i\geq 1}$ is a sequence of independent random points uniformly distributed on the $d$-dimensional upper halfsphere $\mathbb{S}^d_+:=\mathbb{S}^d\cap\{x_{d+1}\geq 0\}\subset\mathbb{R}^{d+1}$. For $n\geq d+1$ we consider the spherical convex hull 
\begin{equation*}
    K_n^{(s)}:=[X_1,\ldots,X_n]_{\SS^d}
\end{equation*}
of the random points $X_1,\ldots,X_n$, which is defined as the intersection of their positive hull 
\begin{equation*}
    \pos(X_1,\ldots,X_n):=\{\lambda_1X_1+\ldots+\lambda_n X_n\colon \lambda_1,\ldots,\lambda_n\ge 0\}\subset \mathbb{R}^{d+1}
\end{equation*}
with the unit sphere $\mathbb{S}^d$. This convex hull $K_n^{(s)}$ is a spherical random polytope that approximates the half-sphere $\mathbb{S}^d_+$, as $n\to \infty$. Remarkably, the expected number of $k$-dimensional spherical faces of $K_n^{(s)}$ does not grow to infinity with $n\to\infty$ as it is the case for classical random polytopes in $\RR^d$. Instead we have that, without any renormalization,
\begin{equation}\label{eq:PolytopesSphere}
    \lim_{n\to\infty}\mathbb{E}f_{k}(K_n^{(s)}) = \tilde{c}_{d,k},
\end{equation}
where $\tilde{c}_{d,k}$ is a constant only depending on $d$ and $k$, see \cite{BHRS17} for $k\in\{0,d-1\}$ and \cite{KMTT} for general $k$. We remark that after gnomonic projection with respect to the north pole of the interior of the half-sphere $\mathbb{S}^d_+$, the spherical random polytope $K_n^{(s)}$ may be identified with the convex hull of random points in $\RR^d$, having a so-called beta-prime distribution in $\RR^d$ with parameter $\beta=\frac{d+1}{2}$, where the probability density of a beta-prime distribution on $\RR^d$ with parameter $\beta>d/2$ is given by
\begin{equation}\label{eq:betaprimedensity}
    \widetilde{f}_{d,\beta}(\bx) = \widetilde{c}_{d,\beta}(1+\|\bx\|^2)^{-\beta},\qquad x\in\RR^d,\quad 
    \widetilde{c}_{d,\beta}     := \frac{\Gamma(\beta)}{\pi^{\frac{d}{2}}\Gamma(\beta-\frac{d}{2})},
\end{equation}
see \cite{KMTT,KTZ}. 

The half-sphere $\mathbb{S}_+^d$ can be seen as a spherical convex polytope with a single facet and no other boundary structure, and similarly we want to think of $\RR^d$ as a $d$-dimensional convex `unbounded polytope' with a single facet at infinity.  Comparing the case of bounded polytopes in $\RR^d$ with the `unbounded polytope' we arrive at the following natural question:
\emph{Are there models for random polytopes that interpolate between the behaviour of \eqref{eq:Polytopes} and \eqref{eq:PolytopesSphere}?}

\smallskip
In order to answer this question we are focusing in this paper on the case $k=d-1$ and on the following construction, which generalizes the approach in \cite{BHRS17,KMTT}. Given $j\in\{1,\ldots,d\}$ hyperplanes $H_1,\ldots,H_j$ passing through the origin of $\mathbb{R}^{d+1}$ and are otherwise in general position we define the set
\begin{equation*}
    \mathbb{S}_{j,+}^d:=\mathbb{S}^d\cap H_1^+\cap\ldots\cap H_j^+,
\end{equation*}
where $H_i^+$ denotes the positive halfspace, bounded by the hyperplane $H_i$, $i\in\{1,\ldots,j\}$. Then $\mathbb{S}_{j,+}^d$ is a $d$-dimensional spherical convex subset of $\mathbb{S}^d$, which contains a great subsphere of dimension $d-j$, and its shape is determined by the angles between $H_1,\dotsc, H_j$. Let further $(X_i)_{i\geq 1}$ be independent random points uniformly distributed on $\mathbb{S}_{j,+}^d$ and for $n\geq d+1$ let $K_{n}^{(s,j)}$ be the spherical convex hull of $X_1,\ldots,X_n$. Note that for $j=1$, up to a rotation, $\mathbb{S}_{1,+}^{d}$ can be identified with $\mathbb{S}_+^d$ and the number of facets of the spherical random polytope $K_n^{(s,1)}$ has the same distribution as that of $K_n^{(s)}$ studied in \cite{BHRS17,KMTT}. On the other hand, intersecting the unit sphere with $d+1$ halfspaces one can think of $K_n^{(s,d)}$ after gnomonic projection as the convex hull of $n$ independent random points, having a beta-prime distribution with parameter $\beta=\frac{d+1}{2}$ restricted to the domain of a $d$-dimensional simplex in $\mathbb{R}^d$. In parallel to this model, to which we shall refer to as the binomial model, we also consider its analogue for Poisson point processes, the so-called Poisson model. Namely, let $\eta_{\I}$ be a Poisson point process in $\SS^d$ with intensity measure given by $x\mapsto \I {\bf 1}_{\mathbb{S}_{j,+}^d}(x)$. We define a spherical Poisson random polytope $K_{\eta_{\I}}^{(s,j)}$ as a spherical convex hull of $\eta_{\I}$.

We conjecture that the models just described provide a family of examples where the behavior of the expected facet number varies between \eqref{eq:PolytopesSphere} and \eqref{eq:Polytopes} as $j$ varies between $1$ and $d$. More precisely, we formulate the following conjecture, where in each case $o_d(1)$ stands for a dimension-dependent sequence which converges to zero, as $n\to\infty$.

\smallskip
\pagebreak[3]
\noindent \textbf{Conjecture:} \textit{For $j\in\{1,\ldots,d\}$ one has that}
\begin{align*}
    \EE f_{d-1}(K_{n}^{(s,j)}) &= c_{d,j}\, (\log n)^{j-1}(1+o_{d}(1)), \qquad \text{as $n\to \infty$;}\\
    \EE f_{d-1}(K_{\eta_{\I}}^{(s,j)}) &= \bar{c}_{d,j}\, (\log \I)^{j-1}(1+o_{d}(1)),\qquad \text{as $\I\to \infty$,}
\end{align*}
\textit{where $c_{d,j}, \bar{c}_{d,j}$ are constants that depend only on $d$ and $j$. In particular, the first-order asymptotic expansion does not depend on the actual angles between $H_1,\dotsc,H_j$, assuming they are in general position. We also conjecture that actually $c_{d,j}=\bar{c}_{d,j}$ and that the error terms $o_d(1)$ depend on the angles between the hyperplanes $H_1,\ldots,H_j$.}
\goodbreak

\medskip
In this article we prove a special case of this conjecture. Namely, we consider the case $j=2$ and assume that the angle $\alpha(H_1, H_2)$ between the hyperplanes $H_1$ and $H_2$ is a right angle. The resulting set $\mathbb{S}_{2,+}^d$ is called a spherical wedge in this paper. Our main result is the following theorem.

\begin{theorem}\label{thm:main}
Let $K=\mathbb{S}_{2,+}^d$ and suppose that $\alpha(H_1, H_2)=\frac{\pi}{2}$. Then there exists a constant $c_{d,2}>0$ only depending on the space dimension $d$ such that 
\begin{align*}
    \EE f_{d-1}(K_{n}^{(s,2)}) &= c_{d,2}\,(\log n)\, (1+o_{d}(1)),\qquad \text{as $n\to \infty$},
\intertext{and}
    \EE f_{d-1}(K_{\eta_{\I}}^{(s,2)}) &= c_{d,2}\,(\log \I)\, (1+o_{d}(1)),\qquad \text{as $\I\to \infty$}.
\end{align*}
\end{theorem}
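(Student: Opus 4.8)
The plan is to reduce, in both models, the computation of $\EE f_{d-1}$ to an integral-geometric identity and then to read off the logarithmic order from the neighbourhood of the $(d-2)$-dimensional ridge $\SS^{d-2}=H_1\cap H_2\cap\SS^d$ along which the two bounding great hemispheres meet at a right angle. For the Poisson model I would start from the Mecke equation: writing $\sigma$ for spherical Lebesgue measure, a generic $d$-tuple $\mathbf{x}=(x_1,\dots,x_d)$ spans a linear hyperplane $H$ which cuts $\SS_{2,+}^d$ into two pieces of areas $A_+(\mathbf{x})$ and $A_-(\mathbf{x})$, and this tuple supports a facet exactly when one of the two pieces is free of further points of $\eta_\I$. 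Hence
\begin{equation*}
    \EE f_{d-1}(K_{\eta_\I}^{(s,2)})=\frac{\I^d}{d!}\int_{(\SS_{2,+}^d)^d}\Bigl(e^{-\I A_+(\mathbf{x})}+e^{-\I A_-(\mathbf{x})}\Bigr)\,\sigma(\dint x_1)\cdots\sigma(\dint x_d),
\end{equation*}
and the binomial model satisfies the analogous identity with $e^{-\I A_\pm}$ replaced by $(1-A_\pm)^{n-d}$, the $A_\pm$ now normalised to a probability measure. I would carry the analysis through for the Poisson integral and transfer it to the binomial case at the very end by de-Poissonisation, noting that a constant rescaling of the intensity only perturbs lower-order terms, so that the coefficient of the logarithm is the same in both models.

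Next I would disintegrate this integral by the linear spherical Blaschke–Petkantschin formula, replacing the integration over the $d$ points by an outer integration over the hyperplane $H$ they span and an inner integration over their positions inside $H\cap\SS_{2,+}^d$. The inner integral produces a Vandermonde factor together with the cap area, and after integrating out the exponential weights one is left with an integral over the single variable $H$ controlled by a power of the smaller cap area. Alternatively, via the gnomonic projection recalled in the excerpt one may identify $\SS_{2,+}^d$ with the half-space $\{\by\in\RR^d:y_d\ge 0\}$ carrying the beta-prime density $\widetilde f_{d,(d+1)/2}$ from \eqref{eq:betaprimedensity}, turning the question into a Euclidean convex-hull problem for a heavy-tailed sample on a half-space; I would use whichever chart makes the individual estimates most transparent.

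The core of the argument is a localisation. As $\I\to\infty$, only hyperplanes cutting off a cap of area of order $1/\I$ contribute at leading order, and such small caps lie near $\partial\SS_{2,+}^d$. I would split the contribution into a bulk part, from caps near the relative interior of a single bounding hemisphere, and a ridge part, from caps near $\SS^{d-2}$. The bulk part is locally modelled by the half-sphere situation behind \eqref{eq:PolytopesSphere}, i.e.\ by a single flat facet, and so stays bounded, feeding only into the $o_d(1)$ term. For the ridge part I would introduce coordinates $(\omega,\theta,\phi)$ with $\omega\in\SS^{d-2}$ along the ridge, $\theta\in[0,\tfrac\pi2]$ the angle to the ridge and $\phi\in[0,\tfrac\pi2]$ the angle in the two-plane orthogonal to it, in which the area element reads $\sigma(\dint x)=\sin\theta\,\cos^{d-2}\theta\,\dint\theta\,\dint\phi\,\dint\omega\approx\theta\,\dint\theta\,\dint\phi\,\dint\omega$ near the ridge. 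Thus transversally to $\SS^{d-2}$ the wedge looks like a planar right-angled corner carrying ordinary planar Lebesgue measure, so that the integration over $\omega$ yields the finite factor $\mathcal{H}^{d-2}(\SS^{d-2})$ and the transverse two-dimensional corner produces the logarithm.

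The logarithm itself arises, as in the classical polytopal estimate \eqref{eq:Polytopes}, from a scale-invariant integral over the transverse scales: a cap at transverse distance $t$ from the ridge has area of order $t^2$, it contributes while $\I t^2\gtrsim 1$, and the resulting integral $\int_{c/\sqrt\I}^{c'}t^{-1}\,\dint t\sim\tfrac12\log\I$ is exactly the one governing the expected vertex number of a uniform planar sample in a corner; carrying the constants through this two-dimensional computation yields the value of $c_{d,2}$. The main obstacle I anticipate is precisely making this localisation rigorous: one must show that the bulk is genuinely $O(1)$ and, more delicately, that caps straddling the ridge at transverse scales larger than $1/\sqrt\I$ contribute only lower-order terms, so that the entire $\log\I$ is captured by the planar corner model and the $(d-2)$ flat directions along the ridge generate no additional logarithmic factors. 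This needs uniform tail bounds on the cap-area functional near $\SS^{d-2}$ and a careful comparison of the true $d$-dimensional facet count with the transverse corner count. Once the Poisson asymptotics $c_{d,2}\log\I\,(1+o_d(1))$ is obtained with an explicit constant, de-Poissonisation delivers the binomial statement with the same $c_{d,2}$.
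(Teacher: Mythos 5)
Your outline reproduces the skeleton of the paper's argument: the Mecke equation, the spherical Blaschke--Petkantschin disintegration over the hyperplane spanned by the $d$ points, and a localisation in which the logarithm is produced by a scale-invariant integral over thin caps at the ridge while all other hyperplane positions contribute $O(1)$; even your prediction of the factor $\mathcal{H}^{d-2}(\SS^{d-2})=\omega_{d-1}$ is confirmed by the paper's constant $c_{d,2}=\frac{1}{d}2^{d-1}\omega_{d-1}A_d$. However, the step you defer (``a careful comparison of the true $d$-dimensional facet count with the transverse corner count'') is precisely where the work lies, and your proposed route to the constant would fail for $d\geq 3$. The dominant facets are $(d-1)$-dimensional and stretch unboundedly along the ridge directions, so the inner Blaschke--Petkantschin factor $I_1(\bz)$, the integral of $\nabla_d$ over the thin section wedge $\SS_{2,+}^d\cap H(\bz)$, is an irreducibly $d$-dimensional object. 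The paper computes it (Lemma \ref{lm:I_1}) by gnomonically projecting this section onto a slab $[-\tan\frac{\beta}{2},\tan\frac{\beta}{2}]\times\RR^{d-2}$, and the outcome is the constant $A_d$ of \eqref{eqn:A_d}: the expected parallelotope volume of points whose ridge-direction coordinates carry a heavy-tailed beta-prime law on $\RR^{d-2}$. This is \emph{not} a planar-corner constant times $\omega_{d-1}$, except in the case $d=2$ where $A_2=\frac23$ (Remark \ref{rem:A2}); so ``carrying the constants through this two-dimensional computation'' cannot yield $c_{d,2}$ in general. Relatedly, your aside that the gnomonic projection identifies $\SS_{2,+}^d$ with a half-space carrying the beta-prime density is incorrect: the ridge lies at angular distance $\pi/2$ from the wedge's centre, so the image is a bi-infinite slab, and it is exactly the heavy tails along this slab that make $A_d$ nontrivial. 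Finally, the localisation itself needs more than ``cap area $\asymp t^2$'': the paper requires the exact identity $I_2=\frac{\omega_{d+1}}{4\pi}\left(\psi-\arcsin(\cos\phi\,\sin\psi)\right)$ together with global lower bounds of the form $I_2\gtrsim\phi^2\psi$ valid on the whole parameter range (Lemma \ref{lm:I_2} and the Appendix); these uniform bounds are what make the bulk terms $J_2,J_3,J_4$ in \eqref{eq:Eq4} stay bounded as $\I\to\infty$, and they do not follow from the soft local model.

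The second genuine gap is the binomial statement. De-Poissonisation in the direction Poisson $\to$ binomial is not a citation-free step for facet functionals: $f_{d-1}$ is not monotone under adding points (one extra point can destroy many facets), so concentration of the Poisson point count does not by itself transfer the asymptotics, and you would need a separate perturbation estimate that you have not indicated. The paper avoids this issue altogether: since you have already written the binomial Mecke identity with kernel $(1-A_\pm)^{n-d}$, one simply reruns the identical Blaschke--Petkantschin computation with $\left(1-I_2/\sigma_d(\SS_{2,+}^d)\right)^{n-d}$ in place of $e^{-\I I_2}$, and the same estimates for $I_1$ and $I_2$ reduce the problem to the elementary integral asymptotics of \cite{AW91}, giving the same constant. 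Replacing your de-Poissonisation step by this direct rerun is both shorter and rigorous.
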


\begin{figure}[t]
    \centering
    \begin{tikzpicture}[scale=0.80,transform shape]
        \begin{scope}
            \clip (-5.7,-2.2) rectangle (5.7,5.7);;
            \node at (0,0) {\includegraphics[width=18cm]{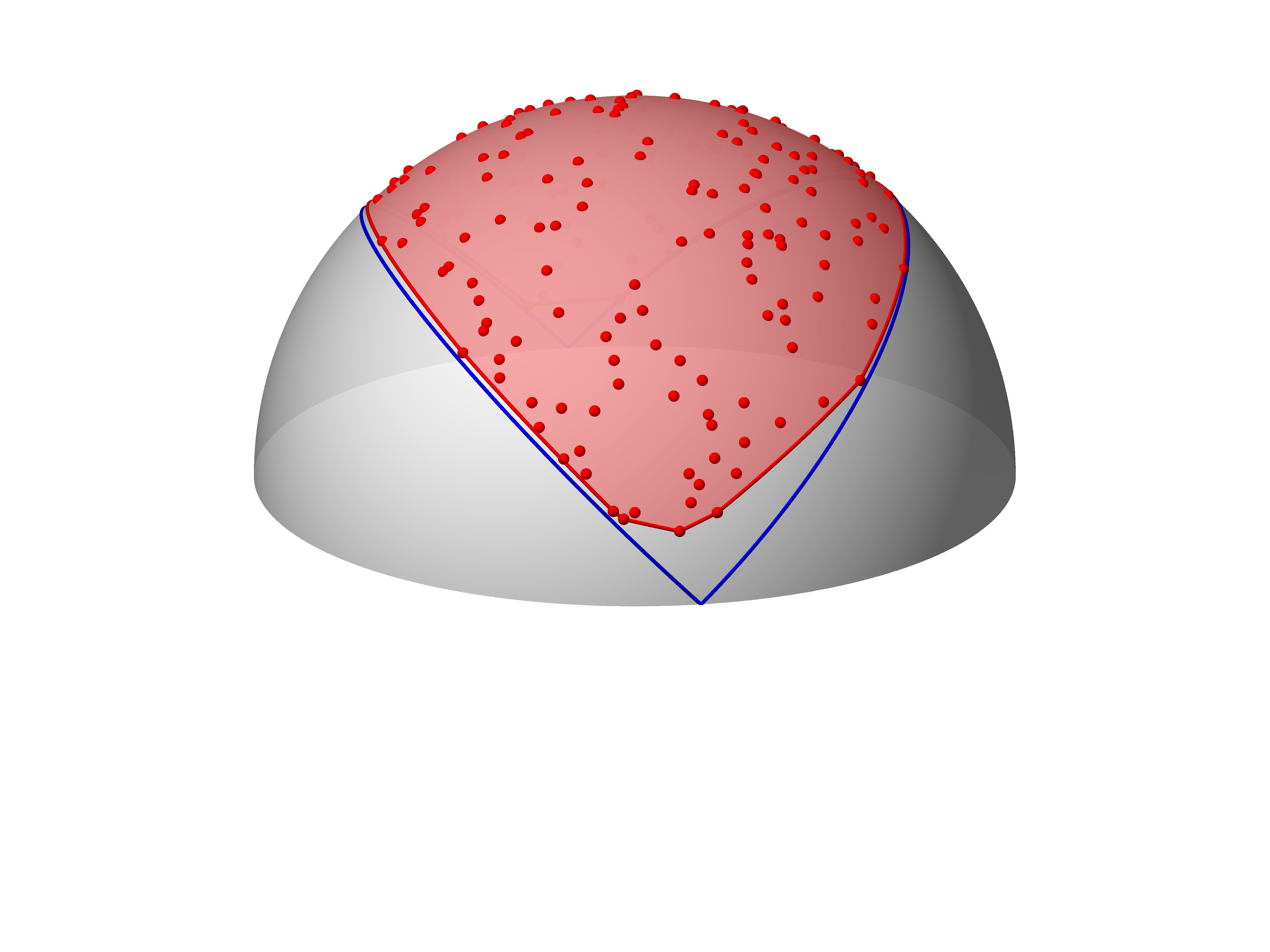}};
        \end{scope}

        \begin{scope}[yshift=-4cm]
            \clip (-8.2,-1.2) rectangle (8.2,1.2);
            \node at (0,0) {\includegraphics[width=15cm]{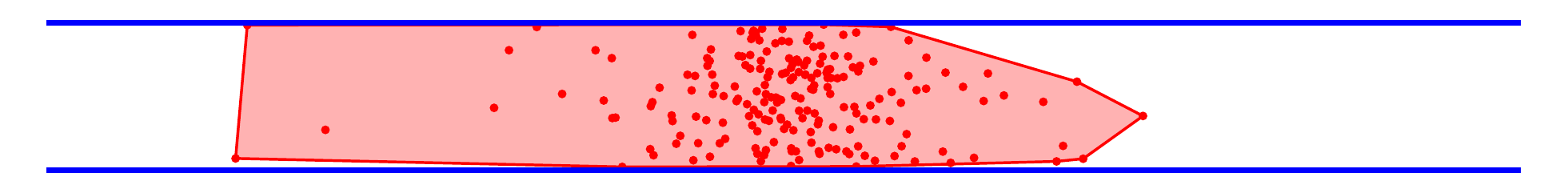}};
        \end{scope}
    \end{tikzpicture}
    
    \caption{The upper panel shows a random spherical polygon in the spherical wedge of dimension two. The same random spherical polygon is shown in the lower panel after gnomonic projection in the center of the spherical wedge.}
    \label{fig:random_wedge}
\end{figure}

We were not able to determine the constant $c_{d,2}$ explicitly. However, we have that
\begin{equation*}
        c_{d,2} = \frac{1}{d} 2^{d-1} \omega_{d-1} A_d,
\end{equation*}
where $A_d$ is the constant defined by \eqref{eqn:A_d} below, whose value is not known. Moreover, for $d=2$ we can determine $A_d$ explicitly, which leads to $c_{2,2} = \frac{4}{3}$, see Remark \ref{rem:A2}. This in turn yields that
\begin{align*}
    \mathbb{E}f_{1}(K_{n}^{(s,2)}) &= \frac{4}{3} (\log n) (1+ o_d(1)), \qquad \text{as $n\to\infty$},
\end{align*}
for $d=2$. This should be compared to the expected number of edges (or vertices) of a random polygon $K_n^{(\ell)}$ defined as the convex hull of $n$ independent and uniform random points in a planar polygon with $\ell\geq 3$ edges. For this model it is known from \cite[Satz 1]{RS63} that
\begin{align*}
    \mathbb{E}f_{1}(K_n^{(\ell)}) &= \frac{2\ell }{3} (\log n) (1+ o_d(1)),\qquad \text{as $n\to\infty$},
\end{align*}
see also \cite[Equation (1.6)]{BB93} for an extension to higher dimensions.
Thus, the leading-order asymptotic behavior of the expected edge number of $\mathbb{E}f_{1}(K_{n}^{(s,2)})$ is smaller than $\mathbb{E}f_{1}(K_n^{(\ell)})$ for any $\ell\geq 3$ and would be similar to a hypothetical $2$-dimensional convex polygon with only $\ell=2$ edges. A similar result holds for the Poisson model as well.

While random spherical polytopes in the upper half-sphere, as discussed above, have been studied in \cite{BHRS17} using tools from spherical integral geometry, it has turned out to be a fruitful idea to first apply a gnomonic projection with respect to the center of the half-sphere. The investigation of random spherical polytopes on half-spheres then turns into a study of a random polytope model in Euclidean space in which the points are distributed according to a so-called beta-prime distribution (see \cite{BLW18, KMTT}). In this paper we shall also use the gnomonic projection, this time with respect to the centre of the spherical wedge, which maps the random spherical polytope we consider to a random polytope defined in an bi-infinite strip in the Euclidean space, where the width of the strip is determined by the angle of the spherical wedge. The generating points are then distributed according to a restricted beta-prime distribution in Euclidean space, see Figure \ref{fig:random_wedge}. 
However, and in contrast to the half-sphere model, we were neither able to carry out the asymptotic analysis of the expected facet number entirely in the spherical model, nor in the corresponding Euclidean model in the strip. Instead we shall switch between both models and employ the tools developed in this paper and methods from both spherical \textit{and} Euclidean integral geometry.

\bigskip

The remaining parts of this paper are structured as follows. In Section \ref{sec:Prelim} we introduce some notation. The proof of Theorem \ref{thm:main} is divided into four steps, which are the content of Sections \ref{sec:ProofStep1}--\ref{sec:ProofStep4}. In the appendix we derive some analytic estimates used in our main argument.

\section{Preliminaries}\label{sec:Prelim}


By $\sigma_{d}$ we denote the spherical Lebesgue measure on the $d$-dimensional unit sphere $\SS^{d}$, normalized in such a way that 
\begin{equation*}
    \sigma_d(\SS^d)=\omega_{d+1}:=\frac{2\pi^\frac{d+1}{2}}{\Gamma\big(\frac{d+1}{2}\big)}.
\end{equation*}
Given vectors $\bx,\by\in\RR^{d+1}$ we denote by $\bx\cdot\by$ their scalar product and by $\|\bx\|$ the Euclidean norm of vector $\bx$. Given a vector $\bu\in\RR^{d+1}\setminus\{0\}$ and a number $t\ge 0$ we define a hyperplane
\begin{equation*}
    H_t(\bu):=\{\bx\in\RR^{d+1}\colon \bx\cdot\bu=t\}
\end{equation*}
in $\RR^{d+1}$. We define two closed halfspaces
\begin{equation*}
    H_t(\bu)^+:=\{\bx\in\RR^{d+1}\colon \bx\cdot\bu\ge t\}\qquad\text{and}\qquad H_t(\bu)^-:=\{\bx\in\RR^{d+1}\colon \bx\cdot\bu\leq t\},
\end{equation*}
bounded by the hyperplane $H_t(\bu)$. In case $t=0$ we simplify our notation and just write $H(\bu):=H_0(\bu)$. Moreover, if the dependence of the normal vector is inessential, we will omit it and simply write $H$, $H^+$ and $H^-$.

\smallskip
Given vectors $\bv_1,\ldots,\bv_m\in\RR^{d+1}$ we denote by $\lspan\{\bv_1,\ldots,\bv_m\}$ their linear span, which is the smallest linear subspace, containing all of them. Denote by $\be_1,\ldots,\be_{d+1}$ the standard orthonormal basis of $\RR^{d+1}$. In what follows we will often identify $\lspan\{\be_1,\ldots,\be_k\}$ with $\RR^k$. Given a linear $m$-dimensional subspace $M\subset\RR^k$, $m\leq k$ we denote by $\proj_M:\RR^k\mapsto M$ the orthogonal projection operator onto $M$ and by $M^{\perp}$ the orthogonal complement of $M$.

Given a vector $\bu\in\SS^d$ we denote by $\SS^d_{\bu}:=\{\bx\in\SS^d\colon \bx\cdot\bu >0\}$. The gnomonic projection operator $g^d_{\bu}: \SS^d_{\bu}\mapsto H(\bu)$ with respect to point $\bu$ is defined by
\begin{equation}\label{eq:gnomonicproj}
g^d_{\bu}(\bv)=\frac{\bv}{\bu\cdot\bv}-\bu.
\end{equation}
The inverse of the gnomonic projection $(g^d_{\bu})^{-1}:H(\bu)\mapsto \SS^d_{\bu}$ is given by
\begin{equation}\label{eq:gnomonicprojinv}
(g^d_{\bu})^{-1}(\bx)=\frac{\bx+\bu}{\|\bx+\bu\|},
\end{equation}
see \cite[Section 4]{BesauSchuster} for further background material about the gnomonic projection.

\section{Proof of Theorem \ref{thm:main}, Step 1: Reduction}\label{sec:ProofStep1}

We consider first the random polytope model $K_{\eta_{\I}}^{(s,2)}$ based on the Poisson point process $\eta_\I$. Applying the multivariate Mecke formula for Poisson point processes \cite[Corollary 3.2.3]{SW} we find that
\begin{align*}
\mathbb{E}f_{d-1}(K_{\eta_{\I}}^{(s,2)})
    &= \frac{1}{d!}\mathbb{E}\sum_{(\bx_1,\ldots,\bx_d)\in(\eta_{\I})^d_{\neq}}{\bf 1}\{\bx_{1},\ldots,\bx_{d}\text{ generate a facet of }K_{\eta_{\I}}^{(s,2)}\}\\
    &=\frac{\I^d}{d!}\int_{\mathbb{S}_{2,+}^d}\cdots\int_{\mathbb{S}_{2,+}^d}\mathbb{P}(\bx_1,\ldots,\bx_d\text{ generate a facet of }K_{\eta_{\I}\cup\{x_1,\ldots,x_d\}}^{(s,2)})\,\sigma_d(\dint \bx_1)\ldots \sigma_d(\dint \bx_d),
\end{align*}
where the sum runs over all $d$-tuples of distinct points of $\eta_\I$. Next we apply the Blaschke--Petkantschin formula from spherical integral geometry \cite[Lemma 3.2]{BHRS17}, which states that
\begin{equation}\label{eqn:BP-formula}
\begin{aligned}
    &\int_{\mathbb{S}^d}\cdots \int_{\mathbb{S}^d} f(\bx_1,\dotsc,\bx_d)\, \sigma_d(\dint \bx_1)\ldots \sigma_d(\dint \bx_d)= \frac{\omega_{d+1}}{2} \int_{G(d+1,d)}\left[\int_{\mathbb{S}^d\cap H}\cdots \int_{\mathbb{S}^d\cap H} f(\bx_1,\dotsc,\bx_d)\right.\\
    & \hspace{6cm}\times \left.\nabla_d(\bx_1,\dotsc,\bx_d)\, \sigma_{d-1}(\dint \bx_1)\ldots \sigma_{d-1}(\dint \bx_d)\right] \nu_d(\dint H),
\end{aligned}
\end{equation}
where $f:\mathbb{S}^d\to \RR$ is a Borel measurable function, $G(d+1,d)$ is the Grassmannian of $d$-dimensional linear subspaces of $\mathbb{R}^{d+1}$ endowed with rotation invariant Haar probability measure $\nu_d$ and $\nabla_{d}(\bx_1,\ldots,\bx_d)$ is the Euclidean volume of the $d$-dimensional parallelotope spanned by $\bx_1,\ldots,\bx_d$. This formula is a special case of more general kinematic formula obtained in \cite{AZ91} and can be easily derived from the classical linear Blaschke--Petkantschin formula \cite[Theorem 7.2.1]{SW}.

By applying \eqref{eqn:BP-formula} and using the counting property of a Poisson point process we derive that
\begin{align*}
   \mathbb{E}f_{d-1}(K_{\eta_{\I}}^{(s,2)})
   &= \frac{\I^d\omega_{d+1}}{2d!}\int_{G(d+1,d)}
    \bigg(\int_{\mathbb{S}_{2,+}^d\cap H}\cdots\int_{\mathbb{S}_{2,+}^d\cap H}\nabla_{d}(\bx_1,\ldots,\bx_d)\,\sigma_{d-1}(\dint \bx_1)\ldots\sigma_{d-1}(\dint \bx_d)\bigg)\\
    &\qquad \times \big(\exp(-\I\sigma_d(\mathbb{S}_{2,+}^d\cap H^+))+\exp(-\I\sigma_d(\mathbb{S}_{2,+}^d\cap H^-))\big)\,\nu_d(\dint  H)\\
    &= \frac{\I^d}{2d!}\int_{\SS^d}
    \bigg(\int_{\mathbb{S}_{2,+}^d\cap H(\bz)}\cdots\int_{\mathbb{S}_{2,+}^d\cap H(\bz)}\nabla_{d}(\bx_1,\ldots,\bx_d)\,\sigma_{d-1}(\dint \bx_1)\ldots\sigma_{d-1}(\dint \bx_d)\bigg)\\
    &\qquad\times \big(\exp(-\I\sigma_d(\mathbb{S}_{2,+}^d\cap H^+(\bz)))+\exp(-\I\sigma_d(\mathbb{S}_{2,+}^d\cap H^+(-\bz)))\big)\,\sigma_d(\dint \bz),
\end{align*}
where in the last step we used the fact that each unit normal vector $\bz\in\SS^d$ determines a hyperplane $H\in G(d+1,d)$.
We set
\begin{align*}
    I_1(\bz)
    &:=\int_{\mathbb{S}_{2,+}^d\cap H(\bz)}\cdots\int_{\mathbb{S}_{2,+}^d\cap H(\bz)}
        \nabla_{d}(\bx_1,\ldots,\bx_d)\,\sigma_{d-1}(\dint \bx_1)\ldots\sigma_{d-1}(\dint \bx_d),\\
    I_2(\bz)
    &:=\sigma_d(\mathbb{S}_{2,+}^d\cap H^+(\bz)),
\end{align*}
and write
\begin{align*}
   \mathbb{E}f_{d-1}(K_{\eta_{\I}}^{(s,2)})
   &= \frac{\I^d}{2d!}\int_{\SS^d} I_1(\bz)\Big(\exp(-\I I_2(\bz))+\exp(-\I I_2(-\bz))\Big)\,\sigma_d(\dint \bz)\\
    &=\frac{\I^d}{d!}\int_{\SS^d} I_1(\bz)\exp(-\I I_2(\bz))\,\sigma_d(\dint \bz),
\end{align*}
where we used the fact that antipodal mapping $\bz\mapsto -\bz$ is isometric on $\mathbb{S}^d$ and we have $I_1(-\bz) = I_1(\bz)$.

\begin{figure}[t]
\begin{picture}(150,200)
\centering
\put(20,0){\includegraphics[scale=0.45]{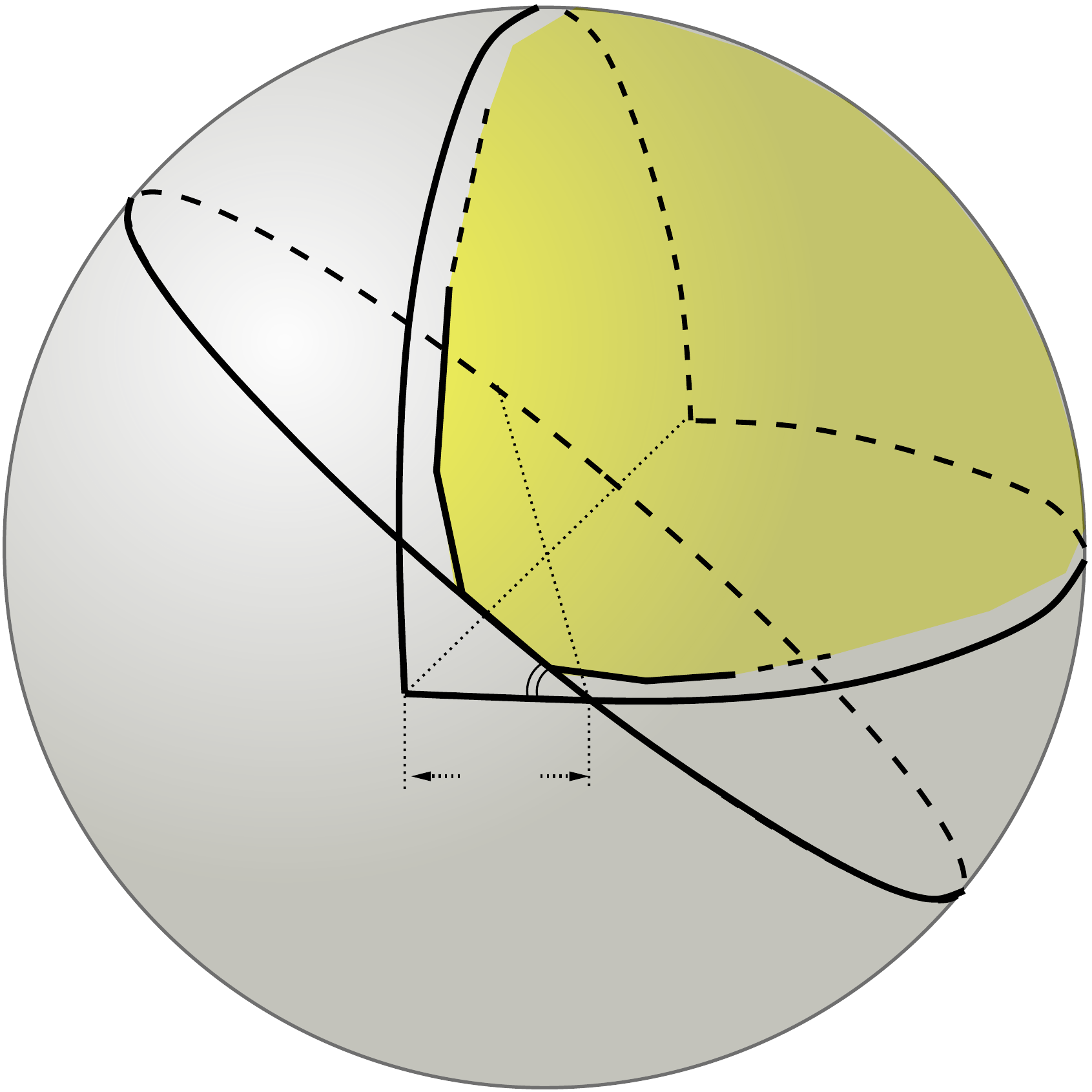}}
\put(115,84){$\psi$}
\put(117,61){$\phi$}
\put(50,129){$H(\bz)$}
\put(170,70){$H_1$}
\put(90,180){$H_2$}
\end{picture} 
    \hspace{3.5cm}
    \begin{tikzpicture}[scale=3.65]
        \draw (0,0) circle(1);
        \draw[dotted] (-1,0) -- (1,0) node[right] {$\be_d$};
        \draw[dotted] (60:1) -- (240:1);
        \draw (0,1) -- (0,-1) node[below] {$\bu$};
        \draw[rotate=-30] (1,0) arc(0:180:1 and -0.4);
        \draw[dotted] (-30:1) -- (150:1);
        \draw[red] (180:0.3) arc (180:240:0.3);
        \node[red] at (210:0.2) {$\phi$};
        
        \fill[orange,opacity=0.5] (0,-1) arc(-90:90:1) -- cycle;
        \fill[blue, opacity=0.5] (0,-1) arc(-90:-30:1) [rotate=-30]arc(0:77:1 and -0.4)--cycle;
        
        \draw[red, thick] (0,-1) arc(-90:-30:1);
        \node[red, below] at (-60:1) {$\phi$};
        
        \draw[green] (-45:1) arc(-140:-162:0.3);
        \node[green] at (-50:0.9) {$\psi$};

        \draw[green] (0,0) -- (-120:{sqrt(1-0.4^2)}) node[midway, below]{$\psi$}; 
        
        \node at (-0.6,-0.3) {$H(\bz)$};
        \fill (0,0) circle(0.015) node[right]{$\be_{d+1}$};
        \fill (-120:{sqrt(1-0.4^2)}) circle(0.015) node[above] {$\bz$};
        
    \end{tikzpicture}
    \caption{A spherical wedge with opening angle $\pi/2$ together with the great hyperspheres induced by $H_1,H_2$ and $H(\bz)$ (left) and its orthogonal projection to the hyperplane $\be_{d+1}^{\perp}$ (right).}
    \label{fig:my_label}
\end{figure}

As a next step we introduce spherical coordinates. For this we choose an orthonormal basis $\be_1,\ldots,\be_{d+1}$ of $\RR^{d+1}$ in such a way, that $H_1=H(\be_{d+1})$ and $H_2=H(\be_d)$. Then
\begin{equation*}
    \mathbb{S}_{2,+}^d 
    = \mathbb{S}^d \cap H^+(\be_{d}) \cap H^+(\be_{d+1}) 
    = \{ \bw\in\mathbb{S}^d : w_d\geq 0, w_{d+1}\geq 0\},
\end{equation*}
and we consider spherical coordinates, which are described by the map
\begin{align*}
    Z: [0,\pi)\times [0,\pi)\times \SS^{d-2}&\to \SS^d\\
    (\phi,\psi,\bu)&\mapsto (\sin\phi)(\sin\psi) \bu - (\cos\phi)(\sin\psi) \be_d - (\cos \psi) \be_{d+1},
\end{align*}
where we identify $\mathbb{S}^d\cap H(\be_{d})\cap H(\be_{d+1})$ with $\SS^{d-2}$.
Hence, for a Borel measureable function $f:\SS^d\to \RR$, we have that
\begin{equation*}
    \int_{\SS^d} f(\bz)\, \sigma_d(\dint \bz) 
    = \int_{\SS^{d-2}} \int_{0}^\pi\int_{0}^\pi f(Z(\phi,\psi,\bu)) \, (\sin\psi)^{d-1} \dint\psi\, (\sin\phi)^{d-2} \dint\phi \, \sigma_{d-2}(\dint \bu).
\end{equation*}

Now, due to the symmetry of the wedge $\SS^d_{2,+}$ with respect to $\lspan\{\be_{d},\be_{d+1}\}$, we have that 
\begin{align*}
    I_{1}(Z(\phi,\psi,\bu))
    &=I_{1}(Z(\pi-\phi,\pi-\psi,\bu)),
\intertext{and}
    I_2(Z(\phi,\psi,\bu))
    &= \sigma_d(\mathbb{S}_{2,+}^d)-I_2(Z(\pi-\phi,\pi-\psi,\bu)),
\end{align*}
for $\phi\in[0,\pi)$, $\psi\in\big[0,\frac{\pi}{2})$.
Thus, we may restrict ourselves to $\phi\in[0,\pi)$ and $\psi\in[0,\frac{\pi}{2})$, see Figure \ref{fig:my_label}.

Applying this transformation to the last representation for $\mathbb{E}f_{d-1}(K_{\eta_\I}^{(s,2)})$ yields
\begin{equation}\label{eq:Eq2}
\begin{aligned}
    \mathbb{E}f_{d-1}(K_{\eta_\I}^{(s,2)}) 
    &= \frac{\I^d}{d!}\int_{\SS^{d-2}} \int_{0}^{\pi}\int_{0}^{\pi/2} 
         I_1(Z(\phi,\psi,\bu)) \\
    &\qquad\qquad\times \left(\exp\!\big(-\I I_2(Z(\phi,\psi,\bu))\big) 
        + \exp\!\big(-\I\big[\sigma_d(\mathbb{S}_{2,+}^d) - I_2(Z(\phi,\psi,\bu))\big]\big)\right)\\ 
    &\qquad\qquad\times (\sin \phi)^{d-2} (\sin \psi)^{d-1}\, \dint \psi\, \dint \phi \, \sigma_{d-2}(\dint \bu).
\end{aligned}
\end{equation}

\begin{figure}[t]
\centering
\begin{tikzpicture}[scale=5]
    
    \fill[opacity=0.3] (0,0) arc(-90:-60:2 and 1) arc(65:79.9:4 and 2) arc(170:180:1 and 2) -- cycle;
    \fill[opacity=0.3] (0,0) arc(-90:-81.6:2 and 1) arc(10.1:25: 2 and 4) arc(150:180:1 and 2) -- cycle;

    \draw (0,0) arc (-90:-45:2 and 1) node[right] {$H_1=H(\be_{d+1})$};
    \draw (0,0) arc (-90:-95:2 and 1);
    
    \draw (0,0) arc (180:135: 1 and 2) node[above] {$H_2=H(\be_{d})$};
    \draw (0,0) arc (180:185: 1 and 2);
    
    \draw (0,1)++ (-60:2 and 1) arc (65:79.9: 4 and 2);
    \draw[thick] (1,0)++ (150:1 and 2) arc (25:10.1: 2 and 4) node[midway,right] {$\beta$};
    
    \draw (0,0) -- (1,1) node[right] {$H_0$};
    \draw (5:1.3) arc (5:85:1.3);
    
    \draw[thick] (0,0) arc (-90:-81.6:2 and 1) node[below,midway, yshift=0.1cm] {$\phi$};
    \draw (0,1)++(-83:2 and 1) arc (180:97:0.05) node[left, midway, yshift = 0.1cm, xshift=0.1cm] {$\psi$};
    
    \draw[thick] (0,0) arc (180:150: 1 and 2) node[left,midway] {$\widetilde{\phi}$};
    \draw (1,0)++ (153:1 and 2) arc (-110:-55:0.05) node[midway,below,xshift=-0.03cm] {$\widetilde{\psi}$};
    
    \draw (0,1)++ (-63:2 and 1) arc (200:145:0.05) node[above right, yshift=-0.1cm] {$\widetilde{\psi}$};
    
    \fill[black] (0,0) circle (0.01) node[below left] {$\bu$};
    \fill[black] (0,1)++(-50.3:2 and 1) circle(0.01) node[below right] {$\be_{d}$};
    \fill[black] (1,0)++(140.3:1 and 2) circle(0.01) node[above left] {$\be_{d+1}$};
    \fill[black] (0.92,0.92) circle (0.01) node[left, xshift=-0.1cm] 
        {$\frac{\be_{d}+\be_{d+1}}{\|\be_{d}+\be_{d+1}\|}$};
    
    \draw (0.22,0.22) arc(-140:-75:0.05) node[midway,below, xshift=-0.05cm] {$\alpha$};
    \draw (0,1)++(-88:2 and 1) arc (-10:54:0.05) node[midway, right, yshift=0.15cm] {$\frac{\pi}{4}$};
\end{tikzpicture}
\caption{The relation between $(\phi,\psi)$ and $(\widetilde{\phi},\widetilde{\psi})$ is derived by reflection about $H_0$.}\label{fig:symmetry}
\end{figure}
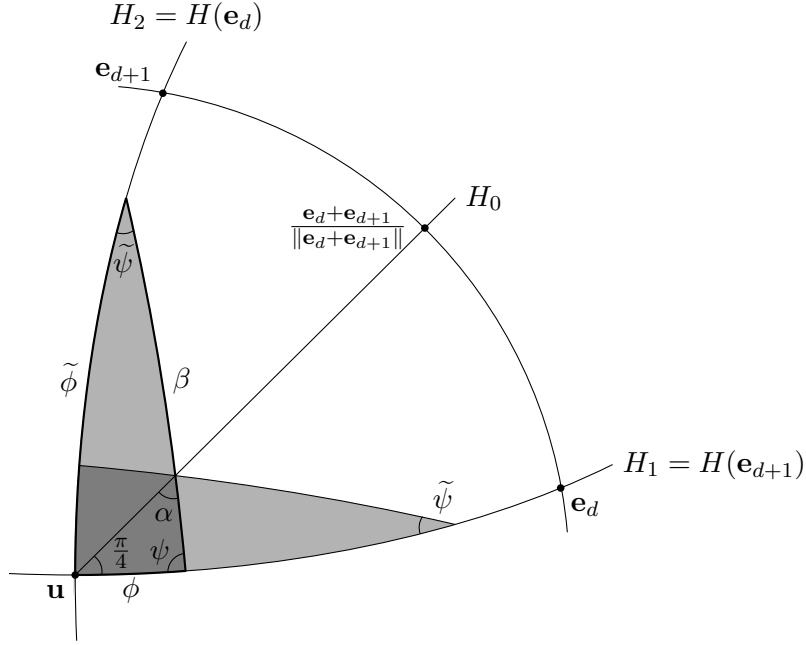

Finally, for $\phi,\psi\in[0,\pi/2)$ we may reflect with respect to the hyperplane $H_0 := \lspan\{\be_d+\be_{d+1}\} + (H(\be_{d})\cap H(\be_{d+1}))$ and switch the role of the hyperplanes $H_1=H(\be_{d+1})$ and $H_2=H(\be_{d})$. This operation is an isometry on $\SS^d$ and maps $(\phi,\psi)\in[0,\pi/2)\times [0,\pi/2)$ to $(\widetilde{\phi},\widetilde{\psi})\in[0,\pi/2)\times [0,\pi/2)$,
where $\widetilde{\phi},\widetilde{\psi}$ are determined due to Napier's rules for right spherical triangles by
\begin{equation*}
    \tan\widetilde{\phi} = (\tan \psi)(\sin \phi),\qquad
    \tan \phi = (\tan\widetilde{\psi})(\sin\widetilde{\phi}),
\end{equation*}
see Figure \ref{fig:symmetry}.
The mapping $G(\widetilde{\phi},\widetilde{\psi})=(\phi,\psi) $ does not change the value of $I_1$ and $I_2$, i.e., $I_1\circ G = I_1$ and $I_2\circ G = I_2$, for $\widetilde{\phi},\widetilde{\psi}\in[0,\pi/2)$.
Furthermore, if $\tan \psi > 1/\cos \phi$, then
\begin{equation*}
    \tan \widetilde{\psi} 
    = \frac{\tan\phi}{\tan\widetilde{\phi}} \frac{1}{\cos\widetilde{\phi}} 
    = \frac{1}{(\tan\psi)(\cos\phi)} \frac{1}{\cos\widetilde{\phi}} < \frac{1}{\cos\widetilde{\phi}}.
\end{equation*}
This condition arises from the observation that for the normal $\bn_0=\frac{1}{\sqrt{2}}(\be_{d+1}-\be_{d})$ of $H_0$ the angle $\alpha:=\alpha(\phi,\psi)$ between $Z(\phi,\psi,\bu)$ and $\bn_0$, i.e.,
\begin{equation*}
    \cos \alpha = z\cdot\bn_0 = \frac{1}{\sqrt{2}} \Big((\sin \psi)(\cos \phi)-\cos\psi\Big),
\end{equation*}
satisfies $\alpha+(\alpha\circ G)=\pi$ and
\begin{equation*}
    \alpha(\phi,\psi) <\frac{\pi}{2} \quad \Longleftrightarrow \quad \tan \psi > \frac{1}{\cos \phi}.
\end{equation*}

We calculate
\begin{equation*}
    |\det \nabla G| = \frac{\tan \widetilde{\psi}}{\sqrt{1+(\tan\widetilde{\psi})^2(\sin\widetilde{\phi})^2}}
\end{equation*}
and
\begin{equation*}
    \sin \phi = \frac{(\tan\widetilde{\psi})(\sin\widetilde{\phi})}{\sqrt{1+ (\tan\widetilde{\psi})^2 (\sin \widetilde{\phi})^2}},\qquad
    \sin \psi = (\cos \widetilde\psi) \sqrt{1+ (\tan\widetilde{\psi})^2 (\sin \widetilde{\phi})^2}.
\end{equation*}
Hence,
\begin{align*}
    (\sin\phi)^{d-2} (\sin\psi)^{d-1} |\det\nabla G| 
    = (\sin \widetilde{\phi})^{d-2} (\sin\widetilde{\psi})^{d-1}.
\end{align*}
This yields
\begin{align*}
    &\int_{0}^{\pi/2} \int_{0}^{\pi/2} 
        I_1 \exp(-\I I_2) (\sin\phi)^{d-2} (\sin\psi)^{d-1}
            \, \dint \psi\, \dint\phi\\
    &\qquad = \int_{0}^{\pi/2} \int_{0}^{\arctan(1/\cos \phi)} 
        I_1 \exp(-\I I_2) (\sin \phi)^{d-2} (\sin \psi)^{d-1} \, \dint \psi \,\dint \phi\\
    &\qquad\qquad + \int_{0}^{\pi/2} \int_{0}^{\arctan(1/\cos \widetilde{\phi})} 
        (I_1\circ G) \exp(-\I (I_2\circ G)) (\sin \widetilde{\phi})^{d-2} (\sin \widetilde{\psi})^{d-1} 
            \, \dint \widetilde{\psi}\, \dint \widetilde{\phi}\\
    &\qquad = 2\int_{0}^{\pi/2} \int_{0}^{\arctan(1/\cos\phi)} 
        I_1 \exp(-\I I_2) (\sin\phi)^{d-2} (\sin\psi)^{d-1}\, \dint \psi\, \dint\phi.
\end{align*}

Since for all $\varepsilon<1/2$, we have that
\begin{equation*}
    \{(\phi,\psi)\in[0,\pi/2)^2 : \tan\psi \leq 1/\cos\phi\} 
    \subset [0,\pi/2)^2 \setminus \Big([0,\varepsilon)\times [3\pi/8,\pi/2)\Big),
\end{equation*}
see Figure \ref{fig:regions}, together with \eqref{eq:Eq2} we conclude that for any such $\varepsilon$,
\begin{align}\label{eqn:lower}
    \mathbb{E}f_{d-1}(K_{\eta_\I}^{(s,2)}) 
    &\geq \frac{2}{d!}\I^d\int_{\SS^{d-2}} \int_{0}^\varepsilon \int_{0}^\varepsilon
            I_1 \exp(-\I I_2)\, (\sin\phi)^{d-2}(\sin\psi)^{d-1} \, \dint\psi\,\dint\phi\,\sigma_{d-2}(\dint\bu)
\end{align}
and
\begin{align}
 &\mathbb{E}f_{d-1}(K_{\eta_\I}^{(s,2)}) \notag\\
    &\leq \frac{2}{d!} \I^d\int_{\SS^{d-2}} \int_{0}^\varepsilon \int_{0}^\varepsilon
            I_1 \exp(-\I I_2)\, (\sin\phi)^{d-2}(\sin\psi)^{d-1} 
                \, \dint\psi\,\dint\phi\,\sigma_{d-2}(\dint\bu)\notag \\ \notag
    &\qquad +\frac{2}{d!} \I^d\!\int_{\SS^{d-2}}\int_0^{\varepsilon} \int_{\varepsilon}^{3\pi/8}
            I_1 \exp(-\I I_2)\, (\sin\phi)^{d-2}(\sin\psi)^{d-1} 
                \, \dint\psi\,\dint\phi\,\sigma_{d-2}(\dint\bu)\\ \notag
    &\qquad +\frac{2}{d!} \I^d \!\int_{\SS^{d-2}}\int_{\varepsilon}^{\pi} \int_{0}^{\pi/2}
            I_1 \exp(-\I I_2)\, (\sin\phi)^{d-2}(\sin\psi)^{d-1} 
                \, \dint\psi\,\dint\phi\,\sigma_{d-2}(\dint\bu)\\ \label{eqn:upper}
    &\qquad + \frac{1}{d!} \I^d\!\int_{\SS^{d-2}} \int_0^{\pi} \int_{0}^{\pi/2}\!
            I_1 \exp(-\I [\sigma_d(\SS_{2,+}^d)-I_2])\, (\sin\phi)^{d-2}(\sin\psi)^{d-1} 
                \, \dint\psi\,\dint\phi\,\sigma_{d-2}(\dint\bu).
\end{align}

\begin{figure}[t]
\centering
\begin{tikzpicture}[scale=2]
    \draw[->] (-0.25,0) -- (2.25,0) node[below] {$\phi$};
    \draw[->] (0,-0.25) -- (0,2.25) node[left] {$\psi$};
    
    \fill[domain=0:2, smooth, variable=\x,samples=500,opacity=0.3] plot ({\x}, {atan(1/cos(\x*45))/90*2}) -- (2,0) -- (0,0) -- cycle;
    
    \def\eps{0.2}
    \draw[domain=0:2, smooth, variable=\x,samples=500] plot ({\x}, {atan(1/cos(\x*45))/90*2});
    \node at (1.3,0.8) {$\tan \psi \leq \sec \phi$};
    \draw (2,2)--(2,0) node[below] {$\frac{\pi}{2}$};
    \node[left] at (0,1) {$\frac{\pi}{4}$};
    \draw (2,2)--(0,2) node[left] {$\frac{\pi}{2}$};
    \draw[dotted] (\eps, 2) -- (\eps,0) node[below] {$\varepsilon$};
    \draw[dotted] (\eps, 1.5) -- (0, 1.5) node[left] {$\frac{3\pi}{8}$};
    \draw[dotted] (\eps, \eps) -- (0, \eps) node[left] {$\varepsilon$};
    \node[below left] at (0,0) {$0$};
\end{tikzpicture}
\caption{The domain $[0,\frac{\pi}{2})^2$ with the two regions bounded by the condition $\tan\psi = \sec \phi$.} \label{fig:regions}
\end{figure}
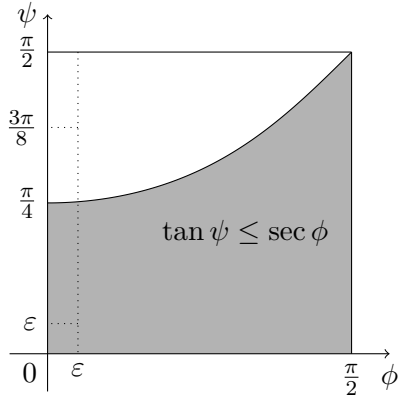

In order to proceed we need to derive estimates for $I_1(Z(\phi,\psi,\bu))$ and $I_2(Z(\phi,\psi,\bu))$ for $\phi\in (0,\pi)$, $\psi\in(0,\pi/2)$ and $\bu\in\mathbb{S}^{d-2}$. This is the purpose of Step 2 presented in the next section.

\section{Proof of Theorem \ref{thm:main}, Step 2: Estimates for \texorpdfstring{$I_1$}{I1} and \texorpdfstring{$I_2$}{I2}}\label{sec:ProofStep2}

\subsection{Estimate for \texorpdfstring{$I_1$}{I1}}

Let us start with the analysis of $I_1(Z(\phi,\psi,\bu))$. First of all we will prove, that $\mathbb{S}_{2,+}^d\cap H(\bz)$ is a spherical wedge.

\begin{lemma}[The Shape of $\SS_{2,+}^d\cap H(\bz)$]\label{lm:Shape}
The spherical convex set $\SS_{2,+}^d\cap H(\bz)$ is a spherical wedge of dimension $d-1$ with an opening angle $\beta(\bz)\in[0,\pi]$. Moreover for any $\phi\in[0,\pi]$,$\psi\in[0,\frac{\pi}{2}]$ and $\bu \in \SS^{d-2}$ we have for $\beta(Z(\phi,\psi,\bu))=\beta$, that
\begin{equation}\label{eqn:beta_exact}
    \sin\beta = \frac{\sin\phi}{\sqrt{(\cos\psi)^2+(\sin\phi)^2(\sin\psi)^2}} \quad\text{and}\quad
    \cos\beta = \frac{(\cos\phi)(\cos\psi)}{\sqrt{(\cos\psi)^2+(\sin\phi)^2(\sin\psi)^2}}
\end{equation}
and therefore
\begin{equation*}
    \tan\beta=\frac{\tan\phi}{\cos\psi}.
\end{equation*}
Furthermore, if $\varepsilon\in(0,\frac{1}{2})$ then
\begin{equation}\label{eqn:beta_asy}
    \tan \beta = (1+O(\varepsilon)) \phi \qquad \text{for all $\phi,\psi\in(0,\varepsilon)$.}
\end{equation}
\end{lemma}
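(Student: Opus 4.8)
The claim has three parts: (i) the slice $\SS_{2,+}^d\cap H(\bz)$ is itself a spherical wedge (of dimension $d-1$) with some opening angle $\beta(\bz)$; (ii) the exact trigonometric identities for $\sin\beta$ and $\cos\beta$ in terms of $(\phi,\psi)$; and (iii) the small-angle asymptotic $\tan\beta=(1+O(\varepsilon))\phi$. I would attack these in order, since (ii) clearly implies the stated formula for $\tan\beta$ and then (iii) follows by a routine Taylor expansion.

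**Step 1: the slice is a wedge.**
First I would recall that $\SS_{2,+}^d=\SS^d\cap H^+(\be_d)\cap H^+(\be_{d+1})$ and that $H(\bz)$ is a great hyperplane through the origin. Intersecting, $\SS_{2,+}^d\cap H(\bz)=\SS^d\cap H(\bz)\cap H^+(\be_d)\cap H^+(\be_{d+1})$. Since $\SS^d\cap H(\bz)$ is a great subsphere $\SS^{d-1}$ (of one lower dimension), the slice is the intersection of this $\SS^{d-1}$ with two halfspaces whose bounding hyperplanes pass through the origin, i.e.\ exactly a spherical wedge of dimension $d-1$ by the very definition given in the introduction. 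The opening angle $\beta(\bz)$ is the angle between the two great subspheres $H_1\cap H(\bz)$ and $H_2\cap H(\bz)$ inside $H(\bz)$, equivalently the (dihedral) angle between the two bounding hyperplanes of the slice measured within the hyperplane $H(\bz)$.

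**Step 2: compute the angle.**
The cleanest way to extract $\beta$ is to identify it as the angle between the two normal directions to the bounding walls, taken \emph{inside} $H(\bz)$. Concretely, the two walls of the slice are $H(\be_{d+1})\cap H(\bz)$ and $H(\be_d)\cap H(\bz)$; their normals within $H(\bz)$ are the projections $\proj_{H(\bz)}\be_{d+1}$ and $\proj_{H(\bz)}\be_d$, and $\beta$ is the angle between these two projected vectors. Using $\proj_{H(\bz)}\bv=\bv-(\bv\cdot\bz)\bz$ and plugging in $\bz=Z(\phi,\psi,\bu)$, whose $\be_d$- and $\be_{d+1}$-components are $-(\cos\phi)(\sin\psi)$ and $-\cos\psi$ respectively (while $\be_d,\be_{d+1}$ are mutually orthogonal unit vectors), I would compute the inner product and the two norms of the projected vectors. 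This gives
\begin{align*}
    \proj_{H(\bz)}\be_{d+1}\cdot\proj_{H(\bz)}\be_d &= -(z_d)(z_{d+1}) = -(\cos\phi)(\sin\psi)(\cos\psi),\\
    \|\proj_{H(\bz)}\be_{d+1}\|^2 &= 1-z_{d+1}^2 = 1-(\cos\psi)^2 = (\sin\psi)^2,
\end{align*}
and analogously $\|\proj_{H(\bz)}\be_d\|^2 = 1-(\cos\phi)^2(\sin\psi)^2$. Taking the sign convention so that $\beta$ is the interior opening angle, $\cos\beta$ is (minus) the normalized inner product. A short simplification, using $(\sin\psi)^2 = (\cos\psi)^2(\tan\psi)^2$ and the Pythagorean identity to recognize the denominator $\sqrt{(\cos\psi)^2+(\sin\phi)^2(\sin\psi)^2}$, should reproduce both formulas in \eqref{eqn:beta_exact}; dividing then yields $\tan\beta=\tan\phi/\cos\psi$.

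**Step 3: the asymptotic and the main obstacle.**
Given the exact formula, \eqref{eqn:beta_asy} is immediate: for $\phi,\psi\in(0,\varepsilon)$ one has $\cos\psi=1+O(\varepsilon^2)$ and $\tan\phi=\phi(1+O(\varepsilon^2))$, so $\tan\beta=\phi(1+O(\varepsilon^2))=(1+O(\varepsilon))\phi$, uniformly. I expect the genuine obstacle to be \emph{not} the asymptotics but the careful bookkeeping in Step 2, namely verifying that the angle between the projected normals, with the correct sign, really is the opening angle of the wedge rather than its supplement, and that the degenerate cases ($\phi\in\{0,\pi\}$ or $\psi\to\pi/2$, where one of the projected vectors may vanish or the slice collapses) are handled consistently with the stated range $\beta\in[0,\pi]$. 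A clean alternative that sidesteps the sign ambiguity is to work in the orthogonal projection to $\be_{d+1}^\perp$ depicted in the right panel of Figure \ref{fig:my_label}: there the great subsphere $\SS^d\cap H(\bz)$ projects to a line through the projected pole, and $\beta$ can be read off directly as a planar angle via Napier's rules for the right spherical triangle, which also connects naturally to the reflection identities already derived in Step 1 of the proof.
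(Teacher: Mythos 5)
Your argument is correct, and it reaches \eqref{eqn:beta_exact} by a route dual to the paper's. The structural step is essentially the same (the paper phrases it conically: the positive hull of $\SS_{2,+}^d\cap H(\bz)$ is a cone with lineality space $L=H(\be_d)\cap H(\be_{d+1})\cap H(\bz)$, and one factors $L$ out to get a planar cone with opening angle $\beta$), but the angle computation differs. Instead of wall normals, the paper exhibits the two \emph{edge generators} of that planar cone, $\by_1=(\sin\phi)(\sin\psi)\,\be_{d+1}+(\cos\psi)\,\bu$ and $\by_2=(\sin\phi)\,\be_d+(\cos\phi)\,\bu$, and reads off both identities simultaneously from $\sin\beta=\|\by_1\times\by_2\|/(\|\by_1\|\,\|\by_2\|)$ and $\cos\beta=(\by_1\cdot\by_2)/(\|\by_1\|\,\|\by_2\|)$; this makes the sign question you single out as the main obstacle evaporate, since the angle between the generators of a planar convex cone \emph{is} its opening angle, at the price of having to find $\by_1,\by_2$ (one vector in each wall, lying in $H(\bz)$). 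Your normal-vector route has lighter algebra --- $\proj_{H(\bz)}\bv=\bv-(\bv\cdot\bz)\bz$ yields the inner product and norms instantly --- but needs two extra observations, both of which you supply or correctly flag: (i) the opening angle is the \emph{supplement} of the angle between the inward normals, which is legitimate here because the projected normals are automatically orthogonal to the ridge $L$ (for $\bx\in L$ one has $\bx\cdot\proj_{H(\bz)}\be_d=\bx\cdot\be_d=0$ and likewise for $\be_{d+1}$), so they lie in the $2$-plane $H(\bz)\cap L^{\perp}$ where the supplement rule applies; and (ii) $\sin\beta$ must be recovered from $\cos\beta$ using $\beta\in[0,\pi]$ and the identity $1-(\cos\phi)^2(\sin\psi)^2=(\cos\psi)^2+(\sin\phi)^2(\sin\psi)^2$, which is exactly the simplification you anticipate. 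Your asymptotic step matches the paper's, and both arguments carry the same general-position caveat for degenerate $\bz$ (e.g.\ your $\proj_{H(\bz)}\be_{d+1}$ vanishes at $\psi=0$, precisely where the paper's cone acquires a larger lineality space); the Napier's-rules alternative you sketch is not needed, though it is consistent with the reflection symmetry used in Section \ref{sec:ProofStep1}.
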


\begin{proof}
In order to prove this statement  we may consider the positive hull of $\SS_{2,+}^d\cap H(\bz)$ in $\RR^{d+1}$ which yields a $d$-dimensional cone $C$ with a lineality space $L=H(\be_{d+1})\cap H(\be_d)\cap H(\bz)$ of dimension $d-2$ if $\bz$ is in general position and in this case $\mathrm{proj}_{L^\bot} C = L^\bot \cap C$ is a proper $2$-dimensional cone with opening angle $\beta:=\beta(\bz)$.
To prove \eqref{eqn:beta_exact}, we choose the parametrization $\bz =Z(\phi,\psi,\bu)$ as introduced in Section \ref{sec:ProofStep1}.
We first note that $L=H(\be_{d+1})\cap H(\be_d)\cap H(\bz) = H(\be_{d+1})\cap H(\be_d)\cap H(\bu)$,
\begin{equation*}
    H(\bz)\cap \SS_{2,+}^d = \{\by\in \SS^d: \by\cdot \be_{d+1}\geq 0, \by\cdot \be_d\geq 0, \by\cdot \bz=0\}
\end{equation*}
and 
\begin{equation*}
    \by\cdot \bz = 0\quad \Longleftrightarrow \quad (\sin \phi) (\sin \psi) (\by\cdot \bu) = (\cos \psi) (\by\cdot \be_{d+1})+(\cos\phi)(\sin \psi) (\by\cdot \be_d).
\end{equation*}
Thus, the $2$-dimensional wedge $\mathrm{proj}_{L^\bot}C\subset L^\bot = \operatorname{span}\{\be_{d+1},\be_d,\bu\}$ is spanned by the two vectors
\begin{equation*}
    \by_1 = (\sin\phi)(\sin\psi)\be_{d+1} + (\cos \psi)\bu \qquad \text{ and } \qquad \by_2=(\sin\phi)\be_d+(\cos \phi)\bu,
\end{equation*}
see Figure \ref{fig:2dwedge}.
\begin{figure}
    \centering
    \begin{tikzpicture}
        \draw[->] (0,0) -- (100:2) node[above] {$\be_{d+1}$};
        \draw[->] (0,0) -- (-30:2) node[right] {$\be_d$};
        \draw[->] (40:1.5) -- (-140:1.5) node[below]{$\bu$};
        
        \fill[gray, opacity=0.3] (-140:1.3) -- (40:1.3) --++ (100:1.8) --++ (-140:2.6)--cycle;
        
        \fill[gray, opacity=0.3] (-140:1.3) -- (40:1.3) --++ (-30:1.8) --++ (-140:2.6)--cycle;
        
        \fill[orange,opacity=0.3] (0,0) -- (0.95,1.9) -- (0.85,-1.7) -- cycle;
        
        \draw[thick] (0,0) -- (0.95,1.9) node[above right] {$\by_1$};
        \draw[thick] (0,0) -- (0.85,-1.7) node[below right]{$\by_2$};
        
        \draw[-] (0.2,-0.4) arc (-30:35:0.7) node[below right] {$\beta$};
        
        \node at (1.5,0.5) {$\mathrm{proj}_{L^\bot} C$};
    \end{tikzpicture}
    \caption{Illustration of the $2$-dimensional wedge $\mathrm{proj}_{L^\bot} C$.}
    \label{fig:2dwedge}
\end{figure}
Thus,
\begin{align*}
    \sin \beta &= \frac{\|\by_1\times \by_2\|}{\|\by_1\|\|\by_2\|} 
    = \frac{\sin \phi}{\sqrt{(\cos\psi)^2 + (\sin\phi)^2(\sin\psi)^2}}\\
\intertext{and}
    \cos \beta &= \frac{\by_1\cdot \by_2}{\|\by_1\|\|\by_2\|}
    = \frac{(\cos \phi)(\cos\psi)}{\sqrt{(\cos\psi)^2 + (\sin\phi)^2(\sin\psi)^2}}
\end{align*}
for all $\phi\in[0,\pi]$ and $\psi\in[0,\frac{\pi}{2}]$, proving \eqref{eqn:beta_exact}.

Now, if $\varepsilon \in (0,\frac{1}{2})$, then
\begin{align*}
    \phi \leq \tan\beta = \frac{\tan\phi}{\cos\psi} \leq \frac{1+\varepsilon}{1-\varepsilon} \phi\leq (1+4\varepsilon)\phi \qquad \text{for all } \phi,\psi\in (0,\varepsilon).
\end{align*}
Thus, $\tan \beta = (1+O(\varepsilon)) \phi$, which proves \eqref{eqn:beta_asy}.
\end{proof}

We are now ready to formulate the result for $I_1(Z(\phi,\psi,\bu))$.

\begin{lemma}\label{lm:I_1}
For any $\phi\in[0,\pi)$, $\psi\in[0,\frac{\pi}{2})$ and $\bu\in\SS^{d-2}$ we have that
\begin{equation*}
    I_1(Z(\phi,\psi,\bu)) \leq \left(\frac{\omega_{d}}{2}\right)^{d}.
\end{equation*}
Furthermore, for any $\phi\in[0,\pi)$, $\psi\in[0,\frac{\pi}{2})$ and $\bu\in\SS^{d-2}$ we have
\begin{align*}
    I_1(Z(\phi,\psi,\bu)) 
    \leq \frac{A_d}{2} \left(\frac{\omega_{d+1}}{4\pi}\right)^d \left(2\tan\frac{\beta(Z(\phi,\psi,\bu))}{2}\right)^{d+1},
\end{align*}
and, if $\varepsilon\in(0,\frac{1}{2})$, then
\begin{align*}
    I_1(Z(\phi,\psi,\bu)) = (1+O_d(\varepsilon)) \frac{A_d}{2} \left(\frac{\omega_{d+1}}{4\pi}\right)^d \phi^{d+1}, \qquad \forall \phi,\psi\in(0,\varepsilon).
\end{align*}
Here, the constant $A_d$ is given by
\begin{equation}\label{eqn:A_d}
    A_d := \EE\nabla_d((U_1,\bZ_1,1),\dotsc,(U_d,\bZ_d,1))
\end{equation}
with random variables $U_1,\ldots,U_d$ uniformly distributed on $[-1,1]$ and random vectors $\bZ_1,\dotsc,\bZ_d$ distributed according to a beta-prime distribution on $\RR^{d-2}$ with parameter $\frac{d+1}{2}$ and probability density function $\widetilde{f}_{d-2,\frac{d+1}{2}}$ as in \eqref{eq:betaprimedensity} in such a way that $U_1,\ldots,U_d,\bZ_1,\ldots,\bZ_d$ are independent. 
\end{lemma}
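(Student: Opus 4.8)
The plan is to show first that $I_1(Z(\phi,\psi,\bu))$ depends on $\bz=Z(\phi,\psi,\bu)$ only through the opening angle $\beta=\beta(\bz)$ of the $(d-1)$-dimensional wedge $W:=\SS_{2,+}^d\cap H(\bz)$ identified in Lemma \ref{lm:Shape}, and then to evaluate the resulting quantity by transporting the spherical integral to Euclidean space via a gnomonic projection centred at the pole bisecting $W$. Since $\nabla_d$ and $\sigma_{d-1}$ are invariant under rotations of $\RR^{d+1}$ and any two $(d-1)$-wedges of equal opening angle are isometric, $I_1$ is a function of $\beta$ alone, so I may compute it on a standard wedge $W_\beta\subset\SS^{d-1}$ whose lineality space has dimension $d-3$.

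For the crude bound I would invoke Hadamard's inequality $\nabla_d(\bx_1,\dots,\bx_d)\le\prod_i\|\bx_i\|=1$, valid because the $\bx_i$ are unit vectors; this reduces $I_1$ to $\sigma_{d-1}(W_\beta)^d$. A wedge of opening angle $\beta$ occupies the fraction $\beta/(2\pi)$ of $\SS^{d-1}$ (its defining constraint involves only the angle in the $2$-plane orthogonal to the lineality space), so $\sigma_{d-1}(W_\beta)=\frac{\beta}{2\pi}\omega_d\le\frac{\omega_d}{2}$ since $\beta\le\pi$, giving $I_1\le(\omega_d/2)^d$.

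For the two sharp estimates I would use the gnomonic projection $g=g^{d-1}_{\bp}$ centred at the pole $\bp$ bisecting $W_\beta$. Writing $\bx_i=(g)^{-1}(\by_i)=(\by_i+\bp)/\|\by_i+\bp\|$ with $\by_i\perp\bp$, the Jacobian of the inverse gnomonic map gives $\sigma_{d-1}(\dint\bx_i)=(1+\|\by_i\|^2)^{-d/2}\,\lambda_{d-1}(\dint\by_i)$ (with $\lambda$ Lebesgue measure), while multilinearity of the determinant and $\|\by_i+\bp\|=(1+\|\by_i\|^2)^{1/2}$ yield $\nabla_d(\bx_1,\dots,\bx_d)=\prod_i(1+\|\by_i\|^2)^{-1/2}\,\nabla_d((\by_1,1),\dots,(\by_d,1))$, where $(\by_i,1)$ are the coordinates of $\by_i+\bp$ in $H(\bz)$. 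The key geometric point is that the gnomonic image of $W_\beta$ is exactly the product strip $(-\tan\frac{\beta}{2},\tan\frac{\beta}{2})\times\RR^{d-2}$: the walls of $W_\beta$ are great subspheres whose normals lie in the $2$-plane orthogonal to the lineality space, so the defining inequalities constrain only the coordinate $s$ across the strip and leave the remaining $d-2$ coordinates $\bw\in\RR^{d-2}$ free. Substituting $\by_i=(s_i,\bw_i)$, I would note that the combined exponent $(d+1)/2$ is precisely that of the beta-prime density $\widetilde f_{d-2,(d+1)/2}$ on $\RR^{d-2}$, and then rescale $s_i=(\tan\frac{\beta}{2})u_i$ with $u_i\in(-1,1)$. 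Pulling the scaling out of the determinant (one power) and out of the $d$ Lebesgue factors ($d$ powers) extracts the prefactor $\tan^{d+1}\frac{\beta}{2}$, leaving $2^d\,\widetilde c_{d-2,(d+1)/2}^{-d}$ times $\EE\,\nabla_d((U_i,\bZ_i,1)_i)=A_d$, apart from the perturbed density factor $\prod_i(1+\tan^2\frac{\beta}{2}\,u_i^2+\|\bw_i\|^2)^{-(d+1)/2}$; a short computation gives $\widetilde c_{d-2,(d+1)/2}^{-1}=\omega_{d+1}/(4\pi)$, matching the constants in the statement.

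It remains to control the perturbation factor. Dropping the nonnegative term $\tan^2\frac{\beta}{2}\,u_i^2$ only enlarges each factor, which yields the upper bound $\frac{A_d}{2}(\frac{\omega_{d+1}}{4\pi})^d(2\tan\frac{\beta}{2})^{d+1}$ once finiteness of $A_d$ is noted (Hadamard bounds $\nabla_d$ by a product of independent factors, each with finite first moment under the beta-prime law). In the regime $\phi,\psi\in(0,\varepsilon)$ I would instead use the uniform two-sided estimate $1\le\frac{1+\tan^2(\beta/2)u_i^2+\|\bw_i\|^2}{1+\|\bw_i\|^2}\le 1+\tan^2\frac{\beta}{2}=1+O(\varepsilon^2)$, so the perturbation only contributes a factor $1+O_d(\varepsilon)$; finally Lemma \ref{lm:Shape} gives $\tan\beta=(1+O(\varepsilon))\phi$ and the half-angle identity gives $2\tan\frac{\beta}{2}=(1+O(\varepsilon))\phi$, converting $(2\tan\frac{\beta}{2})^{d+1}$ into $\phi^{d+1}$. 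I expect the main obstacle to be the bookkeeping of the change of variables — getting the Jacobian exponents and the power of $\tan\frac{\beta}{2}$ exactly right and checking that the normalising constant collapses to $\omega_{d+1}/(4\pi)$ — together with the geometric verification that the gnomonic image is a genuine product strip whose cross-section does not depend on $\bw$.
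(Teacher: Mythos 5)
Your proposal is correct and follows essentially the same route as the paper's proof: Hadamard's inequality together with the wedge-area bound for the crude estimate $\left(\frac{\omega_d}{2}\right)^d$, then the gnomonic projection centred at the pole bisecting the wedge, the identification of the image as the strip $[-\tan\frac{\beta}{2},\tan\frac{\beta}{2}]\times\RR^{d-2}$, the rescaling $s_i=\left(\tan\frac{\beta}{2}\right)u_i$ yielding the prefactor $\frac{1}{2}\left(2\tan\frac{\beta}{2}\right)^{d+1}$ and the beta-prime normalisation $\widetilde{c}_{d-2,\frac{d+1}{2}}^{-1}=\frac{\omega_{d+1}}{4\pi}$, and finally the same control of the perturbed density (dropping, respectively uniformly bounding, the term $\left(\tan\frac{\beta}{2}\right)^2u_i^2$) combined with Lemma \ref{lm:Shape} and the half-angle identity for the asymptotics when $\phi,\psi\in(0,\varepsilon)$. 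Your explicit verification that $A_d<\infty$ and the exact computation $\sigma_{d-1}(W_\beta)=\frac{\beta}{2\pi}\omega_d$ are harmless refinements of steps the paper leaves implicit.
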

\begin{proof}
As it was shown in Lemma \ref{lm:Shape}, $W(\phi,\psi,\bu):=H(Z(\phi,\psi,\bu))\cap \SS_{2,+}^d$ 
is a $(d-1)$-dimensional spherical wedge with an opening angle $\beta:=\beta(Z(\phi,\psi,\bu))$. We recall, that
\begin{equation*}
    I_1:= I_1(Z(\phi,\psi,\bu))
    =\int_{W(\phi,\psi,\bu)}\cdots\int_{W(\phi,\psi,\bu)}\nabla_{d}(\bx_1,\ldots,\bx_d)\,\sigma_{d-1}(\dint \bx_1)\ldots\sigma_{d-1}(\dint \bx_d).
\end{equation*}
Since the volume of the parallelotope $W_d^0=\{\sum_{i=1}^d\lambda_i\bx_i: (\lambda_1,\dotsc,\lambda_d)\in[0,1]^d\}$ is bounded by the product of its side lengths we conclude that $\nabla_d(\bx_1,\ldots,\bx_d)=\mathrm{vol}_d(W_d^0)\leq \prod_{i=1}^d\|\bx_i\|= 1$, which yields
\begin{equation*}
    I_1 \leq \sigma_{d-1}(W(\phi,\psi,\bu))^d \leq (\omega_{d}/2)^d.
\end{equation*}
This settles the first inequality. 

For the next step we consider the gnomonic projection $g^{d-1}_{\bp}$ with respect to the center of symmetry $\bp\in\SS^d$ of the spherical wedge $W(\phi,\psi,\bu)$. 
After applying a corresponding rotation we obtain, 
that the image of the set $W(\phi,\psi,\bu)$ under the gnomonic projection $g^{d-1}_{\bp}$ is 
$[-\tan \frac{\beta}{2},\tan \frac{\beta}{2}]\times \RR^{d-2}$, 
which is in particular independent of $\bu$ since $\beta$ is independent of $\bu$ (see Lemma \ref{lm:Shape}). 
Thus, by definition of the inverse gnomonic projection \eqref{eq:gnomonicprojinv} 
and by \cite[Proposition 4.2]{BW16} we have
\begin{equation}\label{eq:Eq1}
    \begin{aligned}
        I_1
        &= \int_{\left([-\tan \frac{\beta}{2},\tan\frac{\beta}{2}]\times\RR^{d-2}\right)^d} \nabla_d\left(\frac{\by_1+\bp}{\sqrt{1+\|\by_1\|^2}}, \dotsc, \frac{\by_d+\bp}{\sqrt{1+\|\by_d\|^2}}\right) \prod_{i=1}^d \frac{\dint\by_i}{(1+\|\by_i\|^2)^{\frac{d}{2}}}
       \\
       &=\int_{\left([-\tan \frac{\beta}{2},\tan\frac{\beta}{2}]\times\RR^{d-2}\right)^d} \nabla_d(\by_1+\bp, \dotsc, \by_d+\bp) \prod_{i=1}^d \frac{\dint\by_i}{(1+\|\by_i\|^2)^{\frac{d+1}{2}}}.
    \end{aligned}
\end{equation}
Further applying the change of variables 
\begin{equation*}
    \by_i+\bp=\Big(\tan(\beta/2)\,u_i,\bz_i,1\Big),\qquad 1\leq i\leq d,
\end{equation*}
and recalling \eqref{eq:betaprimedensity} we obtain
\begin{align*}
    I_1
        &= \left(\tan \frac{\beta}{2}\right)^{d+1}\int_{([-1,1]\times \RR^{d-2})^d} 
            \nabla_d\big((u_1,\bz_1,1),\dotsc,(u_d,\bz_d,1)\big) 
            \prod_{i=1}^d \frac{\dint u_i\, \dint\bz_i}{(1+(\tan \frac{\beta}{2})^2 u_i^2+\|\bz_i\|^2)^{\frac{d+1}{2}}}\\
        &\leq \left(\tan \frac{\beta}{2}\right)^{d+1} 2^d\, \tilde{c}_{d-2,\frac{d+1}{2}}^{-d}
            \int_{([-1,1]\times \RR^{d-2})^d} 
                \nabla_d\big((u_1,\bz_1,1),\dotsc,(u_d,\bz_d,1)\big)
                \prod_{i=1}^d \frac{\dint u_i}{2} \, \widetilde{f}_{d-2,\frac{d+1}{2}}(\bz_i)\dint\bz_i\\
        &= \frac{A_d}{2} \left(\frac{\omega_{d+1}}{4\pi}\right)^d \left(2\tan\frac{\beta}{2}\right)^{d+1},
\end{align*}
where we used the fact that
\begin{align*}
    \tilde{c}_{d-2,\frac{d+1}{2}} 
        = \frac{\Gamma(\frac{d+1}{2})}{\pi^{\frac{d-2}{2}} \Gamma\left(\frac{d+1}{2} - \frac{d-2}{2}\right)} 
        = \frac{2\Gamma\left(\frac{d+1}{2}\right)}{\pi^{\frac{d-1}{2}}} = \frac{4\pi}{\omega_{d+1}}.
\end{align*}

In particular, for $\varepsilon\in(0,\frac{1}{2})$ this yields, by Lemma \ref{lm:Shape} and the fact that $2\tan(\beta/2)\leq \tan \beta$ for all $\beta\in[0,\pi/2]$, that
\begin{align*}
    I_1 
        \leq \frac{A_d}{2} \left(\frac{\omega_{d+1}}{4\pi}\right)^d \left(\frac{\tan \phi}{\cos\psi}\right)^{d+1}
        \leq \frac{A_d}{2} \left(\frac{\omega_{d+1}}{4\pi}\right)^d ((1+4\varepsilon)\phi)^{d+1}
        = (1+O_d(\varepsilon)) \frac{A_d}{2} \left(\frac{\omega_{d+1}}{4\pi}\right)^d \phi^{d+1},
\end{align*}
for all $\phi,\psi \in (0,\varepsilon)$.

To conclude the lower bound we just note that by Lemma \ref{lm:Shape},
\begin{equation*}
    \frac{1}{(1+(\tan \frac{\beta}{2})^2 u_i^2 + \|\bz_i\|^2)^{\frac{d+1}{2}}} 
    \geq\frac{1}{(1 + \|\bz_i\|^2 + (\tan \frac{\beta}{2})^2)^{\frac{d+1}{2}}}
    \geq \frac{1-O_d(\varepsilon)}{(1+\|\bz_i\|^2)^{\frac{d+1}{2}}},
\end{equation*}
which yields, by \eqref{eqn:beta_asy}, that
\begin{align*}
    I_1 
    &\geq (1-O_d(\varepsilon)) \frac{A_d}{2} \left(\frac{\omega_{d+1}}{4\pi}\right)^d \left(2\tan \frac{\beta}{2}\right)^{d+1}= (1-O_d(\varepsilon)) \frac{A_d}{2} \left(\frac{\omega_{d+1}}{4\pi}\right)^d \phi^{d+1},
\end{align*}
for all $\phi,\psi\in(0,\varepsilon)$.
\end{proof}

\begin{remark}\label{rem:A2}
    We were not able to determine explicitly the value of the constant $A_d$ in \eqref{eqn:A_d} for $d>2$. However, for $d=2$ we have that
    \begin{equation*}
        A_2 = \int_{-1}^1\int_{-1}^1 |x-y| \, \frac{dx}{2}\, \frac{dy}{2} = \frac{2}{3}.
    \end{equation*}
\end{remark}

\subsection{Estimate for \texorpdfstring{$I_2$}{I2}}

We continue with investigating $I_2(Z(\phi,\psi,\bu))$.

\begin{lemma}\label{lm:I_2}
For any $\phi\in[0,\pi)$, $\psi\in\big[0,\frac{\pi}{2}\big)$ and $\bu\in\SS^{d-2}$ we have
\begin{align*}
    I_2(Z(\phi,\psi,\bu)) &= \frac{\omega_{d+1}}{ 4\pi}(\psi-\arcsin(\cos\phi\,\sin\psi)).
\end{align*}
Moreover, there exists $\varepsilon_0>0$ such that for all $\varepsilon\in(0,\varepsilon_0)$ we have
\begin{equation}\label{eqn:I2_asy}
    I_2(Z(\phi,\psi,\bu)) = (1+O(\varepsilon)) \frac{\omega_{d+1}}{8\pi} \phi^2\psi, \qquad \text{for all } \phi,\psi\in(0,\varepsilon),
\end{equation}
and
\begin{align}\label{eqn:I2_bound}
    I_2(Z(\phi,\psi,\bu)) &\geq \frac{\omega_{d+1}}{2\pi^3} \phi^2\psi, \qquad \text{for all } \phi \in (0,\pi), \psi\in (0,\pi/2).
\end{align}
\end{lemma}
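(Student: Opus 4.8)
The plan is to reduce the $d$-dimensional spherical volume $I_2$ to the area of a spherical triangle on the unit $2$-sphere, to evaluate the latter by Girard's theorem, and then to read off the two estimates from the resulting closed formula. The key structural observation is that all three inequalities defining $\SS_{2,+}^d\cap H^+(\bz)$ — namely $x_d\ge 0$, $x_{d+1}\ge 0$ and $\bx\cdot\bz\ge 0$ — involve $\bx$ only through $\proj_E(\bx)$, where $E:=\lspan\{\bu,\be_d,\be_{d+1}\}$, since $\bz=Z(\phi,\psi,\bu)\in E$ by its very definition. I would therefore split $\RR^{d+1}=E^\perp\oplus E$ with $\dim E=3$, write $\bx\in\SS^d$ in polar-join coordinates $\bx=\cos\theta\,\bv+\sin\theta\,\boldsymbol\xi$ with $\bv$ on the unit sphere of $E^\perp$, $\boldsymbol\xi$ on the unit sphere of $E$ (identified with $\SS^2$) and $\theta\in[0,\pi/2]$, and use the associated factorisation of $\sigma_d$, whose Jacobian is $(\cos\theta)^{d-3}(\sin\theta)^{2}$. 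As the defining conditions are positively homogeneous and depend only on $\boldsymbol\xi$, the $\theta$- and $\bv$-integrations decouple and produce, via a Beta-integral together with the definition of $\omega_{d+1}$, exactly the constant $\tfrac{\omega_{d+1}}{4\pi}$; what remains is $\sigma_2(T)$, where $T=\{\boldsymbol\xi\in\SS^2:\xi_2\ge0,\ \xi_3\ge0,\ \boldsymbol\xi\cdot\bz\ge0\}$ in the basis $(\bu,\be_d,\be_{d+1})$ of $E$ (for $d=2$ the join is trivial and $\tfrac{\omega_{d+1}}{4\pi}=1$, so the formula persists).

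To evaluate $\sigma_2(T)$ I would verify that $T$ is a spherical triangle whose vertices are $\bu$ and the two intersections of the great circle $C_3=\{\boldsymbol\xi\cdot\bz=0\}$ with $C_1=\{\xi_2=0\}$ and $C_2=\{\xi_3=0\}$, and compute its interior angles from the edge tangents: the angle at $\bu$ is $\tfrac\pi2$ (the planes $\{\xi_2=0\}$ and $\{\xi_3=0\}$ are orthogonal), the angle at $C_2\cap C_3$ is $\psi$, and the angle at $C_1\cap C_3$ is $\tfrac\pi2-\arcsin(\cos\phi\sin\psi)$; here one must select the orientation yielding the interior angle. Girard's theorem then gives $\sigma_2(T)=\tfrac\pi2+\psi+\big(\tfrac\pi2-\arcsin(\cos\phi\sin\psi)\big)-\pi=\psi-\arcsin(\cos\phi\sin\psi)$, which is the claimed identity.

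The asymptotic relation \eqref{eqn:I2_asy} follows by writing $g:=\psi-\arcsin(\cos\phi\sin\psi)=\arcsin(\sin\psi)-\arcsin(\cos\phi\sin\psi)$ and applying the mean value theorem to $\arcsin$: the difference equals $\sin\psi\,(1-\cos\phi)/\sqrt{1-\xi^2}$ for some intermediate $\xi\in(0,\sin\varepsilon)$, and inserting $\sin\psi=\psi(1+O(\varepsilon^2))$, $1-\cos\phi=\tfrac12\phi^2(1+O(\varepsilon^2))$ and $\sqrt{1-\xi^2}=1+O(\varepsilon^2)$ yields $g=\tfrac12\phi^2\psi\,(1+O(\varepsilon))$, hence $I_2=(1+O(\varepsilon))\tfrac{\omega_{d+1}}{8\pi}\phi^2\psi$.

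For the global lower bound \eqref{eqn:I2_bound} I would reduce it, via the exact formula, to the elementary inequality $g\ge\tfrac{2}{\pi^2}\phi^2\psi$ on $(0,\pi)\times(0,\tfrac\pi2)$. I expect this to be the main obstacle, precisely because the constant $\tfrac{2}{\pi^2}$ is sharp: since $g(\pi,\psi)=2\psi$, the ratio $g/(\phi^2\psi)$ tends to $\tfrac{2}{\pi^2}$ as $\phi\to\pi$, so a lossy estimate such as $g\ge\sin\psi\,(1-\cos\phi)$ combined with $1-\cos\phi\ge\tfrac{2}{\pi^2}\phi^2$ must fail (it loses the factor $\tfrac{\sin\psi}{\psi}$). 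Instead I would show that $\phi\mapsto g(\phi,\psi)/\phi^2$ is non-increasing on $(0,\pi)$ for each fixed $\psi$ — equivalently $\phi\,\partial_\phi g\le 2g$, where $\partial_\phi g=\sin\phi\sin\psi/\sqrt{1-\cos^2\phi\sin^2\psi}$ — so that its infimum is the limit at $\phi\to\pi$, namely $2\psi/\pi^2$, and dividing by $\psi$ gives the claim. The monotonicity inequality $\phi\,\partial_\phi g\le 2g$ is the genuinely analytic step; being an elementary but delicate calculus estimate, it is naturally placed in the appendix.
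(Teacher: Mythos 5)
Your proposal is correct, and it takes a genuinely different route from the paper in all three parts. For the exact formula, the paper applies the gnomonic projection with respect to the north pole, rewrites $I_2$ as an integral of a beta-prime density over $\RR^d$, and uses the marginalization property of beta-prime distributions (projections of beta-prime are again beta-prime) to reduce to a planar integral, which is only then re-interpreted as the area of a spherical triangle and computed by Girard's theorem; your spherical-join decomposition $\SS^d=\SS^{d-3}*\SS^2$ adapted to $E=\lspan\{\bu,\be_d,\be_{d+1}\}$ reaches the same triangle in one step, entirely inside the sphere, avoiding the beta-prime machinery altogether, and your normalizing constant is right, since $\omega_{d-2}\int_0^{\pi/2}(\cos\theta)^{d-3}(\sin\theta)^{2}\,\dint\theta=\frac{\pi^{(d-1)/2}}{2\Gamma((d+1)/2)}=\frac{\omega_{d+1}}{4\pi}$. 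For \eqref{eqn:I2_asy}, the paper expands $\arcsin(\sin\psi\cos\phi)$ in a double power series (appendix) to show the relevant quotient has vanishing total derivative at the origin; your mean value theorem argument is shorter and gives the same conclusion. For \eqref{eqn:I2_bound} the two proofs genuinely diverge: writing $g=\psi-\arcsin(\cos\phi\sin\psi)$ as you do, the paper fixes $\phi$ and proves monotonicity in $\psi$ of $g/(\psi\phi^2)$ (via concavity of $r(z)=\arcsin(z)\sqrt{1-z^2}$), then minimizes over the boundary of the parameter rectangle, whereas you fix $\psi$ and prove that $\phi\mapsto g/\phi^2$ is non-increasing, pushing the infimum directly to the sharp edge $\phi=\pi$; this is arguably the more transparent reduction, since it isolates exactly where the constant $2/\pi^2$ is attained. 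The one step you defer, $\phi\,\partial_\phi g\le 2g$, is true and elementary, so this is not a gap: setting $u=\cos\phi$ and $s=\sin\psi$, both sides vanish at $u=1$, and comparing $u$-derivatives reduces the claim to $u\arccos(u)\,(1-s^2)\le\sqrt{1-u^2}\,(1-u^2s^2)$, which is trivial for $u\le 0$ and for $u=\cos\theta\in(0,1)$ follows from $(1-s^2)\le(1-u^2s^2)$ together with $\theta\cot\theta\le 1$; integrating the derivative inequality from $u$ to $1$ finishes it. Your observation that the naive bound $g\ge\sin\psi\,(1-\cos\phi)\ge\frac{2}{\pi^2}\phi^2\sin\psi$ loses the factor $\frac{\sin\psi}{\psi}$ and hence cannot yield the sharp constant is also accurate, and explains why both proofs must pass through a monotonicity argument.
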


\begin{proof}
We will start with the case $d=2$. In this situation $\mathbb{S}^d\cap H(\be_{3})\cap H(\be_2)=\{\pm \be_1\}$ and, thus,
\begin{equation*}
    Z(\phi,\psi):=Z(\phi,\psi,\be_1)=(\sin\phi)(\sin\psi) \be_1 - (\cos\phi)(\sin\psi) \be_2 - (\cos \psi) \be_{3},
\end{equation*}
which describes the usual spherical coordinates in $\RR^3$. Further,
\begin{equation*}
    I_2(Z(\phi,\psi))=\sigma_2(\SS_{2,+}^{2}\cap H^{+}(Z(\phi,\psi))),
\end{equation*}
where $\SS_{2,+}^{2}\cap H^+(Z(\phi,\psi))$ is a spherical triangle with angles $\frac{\pi}{2}$, $\psi$ and $\arccos(\cos\phi\,\sin\psi)$. Then by Girard's theorem about the area of a spherical triangle we conclude
\begin{equation}\label{eqn:vol_s2+_dim2}
    \sigma_2(\SS_{2,+}^{d}\cap H^+(Z(\phi,\psi)))
    =\psi+\arccos(\cos\phi\,\sin\psi)+\frac{\pi}{2}-\pi=\psi -\arcsin(\cos\phi\,\sin\psi).
\end{equation}

Let us now consider the case $d\ge 3$. First of all we note that for $\bz\in\SS^d$ given by
\begin{equation*}
    I_2(\bz)
    =\sigma_d(\mathbb{S}^d_{2,+}\cap H^{+}(\bz))
    =\int_{\mathbb{S}^d}{\bf 1}(\bx\in \mathbb{S}^d_{2,+}\cap H^{+}(\bz)) \, \sigma_d(\dint\bx).
\end{equation*}
Due to invariance of the spherical Lebesgue measure with respect to rotations and for convenience, we apply a rotation $\rho_d$ to the wedge $\mathbb{S}_{2,+}^d$ so that the normal vector to the hyperplane $H_1$ is $\frac{1}{\sqrt{2}}(\be_{d+1}-\be_d)$  and the normal vector to the hyperplane $H_2$ is $\frac{1}{\sqrt{2}}(\be_{d+1}+\be_d)$. We keep the notation $\mathbb{S}^d_{+}=\{\bx\in\mathbb{S}^d\colon x_{d+1}\ge 0\}$ and let $\bz'=\rho_d(\bz)$. Consider a gnomonic projection $g^d_{\bp_d}$ with respect to the north pole $\bp_d:=(0,0,\ldots, 1)$ of the sphere $\SS^d$
\begin{align*}
    g^d_{\bp_d}: \left\{ \begin{array}{ccc} \SS^d_+&\to& \RR^d,\\
    (x_1,\ldots,x_d,x_{d+1})&\mapsto& \Big(\frac{x_1}{x_{d+1}},\ldots, \frac{x_d}{x_{d+1}}\Big),\end{array}\right.
\end{align*}
 where we identify $\lspan\{\be_1,\ldots,\be_d\}$ with $\RR^d$. Under $g^d_{\bp_d}$ the half great hyperspheres $H_1\cap \mathbb{S}^d_{+}$ and $H_2\cap \mathbb{S}^d_{+}$ are mapped onto the hyperplanes $L_1(\bu_1), L_1(\bu_2) \subset \RR^d$ with the normal vectors $\bu_1=(0,\ldots,0,1)$ and $\bu_2=(0,\ldots,0,-1)$, respectively, and the distance one to the origin.
Analogously, the half great hypersphere $H(\bz')\cap \mathbb{S}^d_{+}$ is mapped onto the hyperplane $L(\bw)$ with normal vector 
\begin{equation*}
    \bw = \left(-\frac{z'_1}{\sqrt{1-(z'_{d+1})^2}}, \ldots, -\frac{z'_d}{\sqrt{1-(z'_{d+1})^2}}\right)
\end{equation*}
and the distance $\frac{z'_{d+1}}{\sqrt{1-(z'_{d+1})^2}}$ to the origin. Thus, by \cite[Proposition 4.2]{BW16} we have that
\begin{equation*}
    \frac{\sigma_d(\mathbb{S}^d_{2,+}\cap H^{+}(\bz))}{\sigma_d(\mathbb{S}_+^d)}
    = \tilde{c}_{d,\frac{d+1}{2}}\int_{\RR^d}{\bf 1}(\by\in L_1(\bu_1)^{-}\cap L_1(\bu_2)^{-}\cap L(\bw)^{+})(1+\|\by\|^2)^{-\frac{d+1}{2}}\dint\by.
\end{equation*}
Next, we use the fact that the beta-prime density $\widetilde{f}_{d,\frac{d+1}{2}}$ is invariant with respect to rotations and we consider a rotation $\theta: \RR^d\mapsto\RR^d$, such that
\begin{equation*}
    \theta \bu_1 
    = \bu_1,\qquad \theta \bu_2
    = \bu_2, \qquad \theta \bw 
    = (0,\ldots,0,t_{d-1},t_d).
\end{equation*}
Since $\theta$ leaves the $d$th coordinate axis and the hyperplanes $L_1(\bu_1),L_1(\bu_2)$ unchanged, we have that
\begin{equation*}
    w_d=(\theta\bw)_d =t_d= -z'_d/\sqrt{1-(z'_{d+1})^2}.
\end{equation*}
On the other hand $\|\bw\|=\|\theta\bw\|=1$, and so 
\begin{equation*}
    (\theta\bw)_{d-1} 
    = t_{d-1} 
    = -\sqrt{\frac{1-(z'_{d+1})^2-(z'_d)^2}{1-(z'_{d+1})^2}}.
\end{equation*}
Thus, applying the change of variables $\by=\theta^{-1}\tilde{\by}$ we arrive at
\begin{equation*}
    \frac{\sigma_d(\mathbb{S}^d_{2,+}\cap H^{+}(\bz))}{\sigma_d(\mathbb{S}_+^d)}
    = \tilde{c}_{d,\frac{d+1}{2}}\int_{\RR^d}{\bf 1}(\tilde{\by}\in L_1(\bu_1)^{-}\cap L_1(\bu_2)^{-}\cap L(\theta\bw)^{+})(1+\|\tilde{\by}\|^2)^{-\frac{d+1}{2}}\dint\tilde{\by}.
\end{equation*}
Now, we use another crucial property of the beta-prime distribution, which says that under orthogonal projection $\proj_M:\RR^d\mapsto M$ onto a $k$-dimensional linear subspace $M$ the beta-prime distribution on $\RR^d$ with density $\widetilde{f}_{d,\beta}$ maps to a beta-prime distribution on $M$ with density $\widetilde{f}_{k, \beta-\frac{d-k}{2}}$. Strictly speaking, if a random vector $X$ on $\RR^d$ has density function $\widetilde{f}_{d,\beta}$, then the projected random vector $I_M(\proj_M(X))$ has density $\widetilde{f}_{k, \beta-\frac{d-k}{2}}$, where $I_M: M\mapsto \lspan\{\be_1,\ldots, \be_k\}$ is an isometry such that $I_M(\proj_M(0))=0$ (see \cite[Lemma 3.1]{KTZ}). We consider the projection $\proj_E$ onto the $2$-dimensional linear subspace $E:=\{\by\in\RR^d\colon y_1=\ldots=y_{d-2}=0\}$, which we identify with $\RR^2$. Under this projection we have
\begin{align*}
    \proj_E(L_1(\bu_1)) &= \{(s_{d-1},s_d)\in\RR^2\colon s_d=1\},\\ 
    \proj_E(L_1(\bu_2))  &= \{(s_{d-1},s_d)\in\RR^2\colon s_d=-1\},\\
    \proj_E(L(\theta\bw)) &= \left\{(s_{d-1},s_d)\in\RR^2\colon -s_{d-1}\sqrt{1-(z'_{d+1})^2-(z'_{d})^2}-s_dz'_d=z'_{d+1}\right\},
\end{align*}
and
\begin{align*}
    \frac{\sigma_d(\mathbb{S}^d_{2,+}\cap H^{+}(\bz))}{\sigma_d(\mathbb{S}_+^d)}
    &=\tilde{c}_{2,\frac{3}{2}}\int_{\RR^2}{\bf 1}((s_{d-1},s_d)\in \proj_E(L_1(\bu_1))^{-}\cap \proj_E(L_1(\bu_2))^{-}\cap \pi_E(L(\theta\bw))^{+})\\
        &\qquad\qquad\qquad\qquad\times(1+s_{d-1}^2+s_d^2)^{-\frac{3}{2}}\dint s_{d-1}\dint s_d.
\end{align*}
Considering the gnomonic projection $g^2_{\bp_2}$ of the two-dimensional half-sphere $\mathbb{S}^2_{+}$ with respect to the north pole $\bp_2:=(0,0,1)$ we note that the integral above describes the spherical area of the spherical triangle $\mathbb{S}^2\cap \widetilde{H}_1^+\cap \widetilde{H}_2^{+}\cap H(\bv')^+$, where $\widetilde{H}_1$ and $\widetilde{H}_2$ are $2$-dimensional linear subspaces in $\RR^3$ with normal vectors $\frac{1}{\sqrt{2}}(\be_3-\be_2)$ and $\frac{1}{\sqrt{2}}(\be_3+\be_2)$, respectively, and $\bv'=(\sqrt{1-(z'_{d})^2-(z'_{d+1})^2},z'_d,z'_{d+1})$. Finally we consider a rotation $\rho_3$ in $\RR^3$ which is defined as an image of the restriction of rotation $\rho_d$ on the linear subspace $\lspan\{\be_{d+1},\be_d,\be_{d-1}\}$ under the isometry $I:\lspan\{\be_{d+1},\be_d,\be_{d-1}\}\mapsto \RR^3$ satisfying $I(\be_{d+1})=\be_3$ and $I(\be_d)=\be_2$. Since the last two coordinates of the vectors $\bz'$ and $\bv'$ are identical, the last two coordinates of the vectors $\bz=\rho_d^{-1}(\bz')$ and $\bv:=\rho_3^{-1}(\bv')$ are identical as well. From this we finally conclude that
\begin{equation*}
    \sigma_d(\mathbb{S}^d_{2,+}\cap H^{+}(\bz))
    ={\frac{\sigma_d(\mathbb{S}_+^d)}{\sigma_2(\mathbb{S}_+^2)}}
        \sigma_2(\mathbb{S}^2_{2,+}\cap H^{+}(\bv)),
\end{equation*}
which has already been computed at the beginning of the proof (see \eqref{eqn:vol_s2+_dim2}) and depends only on the last two coordinates $z_{d+1},z_{d}$ of the vector $\bz$. Thus, 
\begin{equation}\label{eq:I2-}
    I_2(Z(\phi,\psi,\bu))=\frac{\omega_{d+1}}{4\pi}(\psi-\arcsin(\cos\phi\,\sin\psi)).
\end{equation}

In order to prove the second statement of the lemma we consider the function
\begin{equation*}
    f(\phi,\psi):=\frac{\psi-\arcsin(\cos\phi\,\sin\psi)}{\psi\phi^2},\qquad \phi\in(0,\pi],\,\psi\in(0,\pi/2].
\end{equation*}
In Appendix \ref{sec:calc_f} we show that there exists $\varepsilon>0$ such that
\begin{align}
    \frac{1}{2}-\varepsilon \leq f(\phi,\psi) \leq \frac{1}{2} + \varepsilon,\qquad \text{for all } \phi,\psi\in(0,\varepsilon),
\end{align}
and that $f$ has the absolute minimum $2/\pi^2$, which is achieved along the edge $\phi = \pi$ in the rectangle $(\phi,\psi)\in [0,\pi]\times [0,\frac{\pi}{2}]$. This yields \eqref{eqn:I2_asy} and \eqref{eqn:I2_bound}.
\end{proof}

\section{Proof of Theorem \ref{thm:main}, Step 3: Conclusion for the Poisson model}\label{sec:ProofStep3}

We are now ready to finish the proof of Theorem \ref{thm:main} for the Poisson model $K_{\eta_\I}^{(s,2)}$. Before we continue let us summarize the bounds obtained in Lemmas \ref{lm:I_1} and \ref{lm:I_2} in the form we will apply them. There exist $0<\varepsilon_0<1/2$ and constants $c_d, C_d, \theta_d, \Theta_d>0$, such that for all $\bu\in\mathbb{S}^{d-2}$ and $\varepsilon \in(0,\varepsilon_0)$ we have that
\begin{align*}
    \phi,\psi\in\big(0,\varepsilon\big):\quad 
        B_d(1-\theta_d\varepsilon)\phi^{d+1}
            &\leq I_1(Z(\phi,\psi,\bu))
            \leq B_d(1+\Theta_d\varepsilon)\phi^{d+1},\\
        b_d (1-\theta_d\varepsilon) \phi^2\psi
            &\leq I_2(Z(\phi,\psi,\bu))
            \leq b_d(1+\Theta_d\varepsilon)\phi^2\psi,
\end{align*}
where
\begin{equation*}
    b_d = \frac{\omega_{d+1}}{8\pi} \qquad \text{and} \qquad B_d = \frac{A_d}{2} \left(\frac{\omega_{d+1}}{4\pi}\right)^d,
\end{equation*}
and
\begin{align*}
    \phi\in\big(0,\varepsilon\big), \psi\in\big[\varepsilon,3\pi/8\big):&
        &I_1(Z(\phi,\psi,\bu)) &\leq C_d \phi^{d+1},
        &I_2(Z(\phi,\psi,\bu))&\ge c_d\phi^2\varepsilon,\\
    \phi\in\big[\varepsilon,\pi\big), \psi\in\big(0,\pi/2\big) :&
        & I_1(Z(\phi,\psi,\bu))&\leq C_d,
        &I_2(Z(\phi,\psi,\bu)) &\ge c_d\varepsilon^2\psi.
\end{align*}
Note that we used Lemma \ref{lm:Shape} to derive that
\begin{equation*}
    2\tan \frac{\beta}{2} \leq \frac{2\sin \beta}{1+\cos \beta} \leq \frac{2\phi}{\cos(3\pi/8)} \leq 6\phi 
    \quad \text{for all $\phi\in(0,\varepsilon)$, $\psi\in[\varepsilon, 3\pi/8)$.}
\end{equation*}
Proceeding with Estimate \eqref{eqn:lower} we find that
\begin{align*}
    \mathbb{E}f_{d-1}(K_{\eta_\I}^{(s,2)}) 
    &\geq \frac{2}{d!} \I^d \int_{\SS^{d-2}} \int_{0}^{\varepsilon}\int_{0}^{\varepsilon} (\sin \phi)^{d-2} (\sin \psi)^{d-1} I_1(Z(\phi,\psi,\bu)) \\
   &\qquad\qquad\times \exp(-\I I_2(Z(\phi,\psi,\bu))) \, \dint \psi\, \dint \phi \, \sigma_{d-2}(\dint \bu)\\
   &\geq (1-\theta_d\varepsilon)  \frac{2\omega_{d-1} B_d }{d!}\I^d \int_{0}^{\varepsilon}\int_{0}^{\varepsilon} (\sin \phi)^{d-2} (\sin \psi)^{d-1} \phi^{d+1}e^{-\I b_d(1+\Theta_d\varepsilon)\phi^2\psi} \, \dint \psi\, \dint \phi.
\end{align*}
Using the bound $\sin x\ge (1-\varepsilon)x$, which is valid for $x\in\big[0,\varepsilon\big)$, we arrive at
\begin{align}
    \mathbb{E}f_{d-1}(K_{\eta_\I}^{(s,2)})
    &\geq (1-O_d(\varepsilon)) \frac{2\omega_{d-1} B_d}{d!} \I^d\int_{0}^{\varepsilon}\int_{0}^{\varepsilon} \phi^{2d-1} \psi^{d-1} e^{-\I b_d(1+\Theta_d\varepsilon)\phi^2\psi} \, \dint \psi\, \dint \phi \notag\\
    &=(1-O_d(\varepsilon)) \frac{\omega_{d-1} B_d}{d!} \I^d \int_{0}^{\varepsilon^2}\int_{0}^{\varepsilon} s^{d-1}t^{d-1} e^{-\I b_d(1+\Theta_d\varepsilon) st} \, \dint t\, \dint s \notag\\
    &= (1-O_d(\varepsilon)) \frac{ \omega_{d-1} B_d}{d! b_d^d} \int_{0}^{\I b_d(1+\Theta_d\varepsilon)\varepsilon^2} s^{d-1} \int_{0}^{\varepsilon} t^{d-1} e^{-st} \, \dint t\, \dint s.\label{eq:Eq3}
\end{align}

For the upper bound we first note that
\begin{equation}\label{eqn:trivial_lower}
\begin{aligned}
    \sigma_d(\mathbb{S}_{2,+}^d) - I_2(Z(\phi,\psi,\bu)) 
    &= \frac{\omega_{d+1}}{4\pi} \Big(\pi-\psi+\arcsin(\cos(\phi)\sin(\psi))\Big) \\
    &\geq \frac{\omega_{d+1}}{4\pi} \frac{1}{3}\left((\frac{\pi}{2}-\psi)+(\pi-\phi)\right)
    = \frac{\omega_{d+1}}{4\pi} \frac{1}{3}\left(\tilde{\psi}+\tilde{\phi}\right),
\end{aligned}
\end{equation}
for all $\phi\in(0,\pi)$ and $\psi\in(0,\pi/2)$ and with $\tilde{\phi}=\pi-\phi$ and $\tilde{\psi}=\pi/2-\psi$, see Appendix \ref{apx:proof_trivial_lower} for the 
proof of the inequality.
Using this on \eqref{eqn:upper} and applying the estimate $\sin x\leq x$, which holds for all $x\in\big[0,\pi\big]$, we find that
\begin{equation}\label{eq:Eq4}
\begin{aligned}
    \mathbb{E}f_{d-1}(K_{\eta_\I}^{(s,2)}) 
    &\leq (1+O_d(\varepsilon))\frac{2\omega_{d-1}B_d}{d!}\I^d\int_{0}^{\varepsilon}\int_{0}^{\varepsilon}
        \phi^{2d-1} \psi^{d-1}e^{-\I b_d (1-\theta_d\varepsilon)\phi^2\psi} \, \dint \psi\, \dint \phi\\
    &\qquad+ \frac{2\omega_{d-1}C_d}{d!}\I^d\int_{0}^{\varepsilon}\int_{\varepsilon}^{3\pi/8} 
        \phi^{2d-1} e^{-\I c_d\phi^2\varepsilon} \, \dint
     \psi\, \dint \phi\\
    &\qquad+ \frac{2\omega_{d-1}C_d}{d!} \I^d\int_{\varepsilon}^{\pi}\int_{0}^{\pi/2} \psi^{d-1}e^{-\I c_d\varepsilon^2\psi} \, \dint
     \psi\, \dint \phi \\
    &\qquad+ \frac{\omega_{d-1}C_d}{d!} \I^d\int_{0}^\pi\int_{0}^{\pi/2} \phi^{d-2} \exp\left(-\I \frac{\omega_{d+1}}{12\pi} (\phi+\psi)\right)\, \dint\psi\,\dint \phi \\
     &=:J_1+J_2+J_3+J_4.
\end{aligned}
\end{equation}
Let us consider each of the terms $J_1,J_2,J_3,J_4$ separately. For $J_1$ we get
\begin{align*}
    J_1&=(1+O_d(\varepsilon))\frac{\omega_{d-1}B_d}{d! b_d^d} \int_{0}^{\I b_d(1-\theta_d\varepsilon)\varepsilon^2}s^{d-1}
        \int_{0}^{\varepsilon} t^{d-1}e^{-st} \, \dint t\, \dint s.
\end{align*}
Next, for $J_2$ one has that
\begin{align*}
    J_2 &\leq \frac{3\pi \omega_{d-1} C_d}{4d!}   \int_{0}^{\varepsilon}\phi^{2d-1} \I^de^{-\I c_d\varepsilon\,\phi^2} \,\dint \phi\\
    &= \frac{3\pi \omega_{d-1} C_d}{4 d! \varepsilon^d c_d^d} \int_{0}^{\gamma c_d\varepsilon^3} s^{d-1} e^{-s}\, \dint s
    \leq \frac{3\pi \omega_{d-1} C_d}{4 d! \varepsilon^d c_d^d} \Gamma(d),
\end{align*}
and similarly
\begin{align*}
    J_3 & \leq \frac{2\pi \omega_{d-1}C_d}{d!} \int_{0}^{\pi/2}\psi^{d-1} \I^de^{-\I c_d\varepsilon^2\,\psi} \,\dint \psi\\ 
    &= \frac{2\pi \omega_{d-1}C_d}{d! \varepsilon^{2d}c^{d}_d}
        \int_{0}^{c_d\pi\varepsilon^2\I/2} t^{d-1}e^{-t} \dint t
    \leq \frac{2\pi \omega_{d-1}C_d}{d!\varepsilon^{2d}c^{d}_d} \Gamma(d).
\end{align*}
Finally, for $J_4$ we find that
\begin{align*}
    J_4 &= \frac{\omega_{d-1} C_d}{d!} \left(\frac{12\pi}{\omega_{d+1}}\right)^{\!\!d}
        \left(\int_{0}^{\I\frac{\omega_{d+1}}{12\pi}} s^{d-2} e^{-s}\,\dint s\right) 
        \left(\int_{0}^{\I\frac{\omega_{d+1}}{12\pi}} e^{-t}\, \dint t\right)\\ 
    &\leq \frac{\omega_{d-1} C_d}{d!} \left(\frac{12\pi}{\omega_{d+1}}\right)^{\!\!d} \Gamma(d-1).  
\end{align*}
Thus,
\begin{equation*}
    \limsup_{\I \to \infty} \frac{J_2+J_3+J_4}{\log\I} = 0,
\end{equation*}
and finally setting
\begin{equation*}
    G(\alpha,\I):=\int_{0}^{\alpha\I} s^{d-1} \int_{0}^{\varepsilon} t^{d-1} e^{-st} \, \dint t\, \dint s
\end{equation*}
we find that $\lim_{\I\to\infty} G(\alpha,\I)=\infty$ for any $\alpha>0$. Using L'Hospital's rule we conclude that
\begin{align*}
   \lim_{\I\to \infty} 
        \frac{G(\alpha,\I)}{\log \I}
        = \lim_{\I\to\infty} \frac{\alpha \left(\alpha\I\right)^{d-1} \int_{0}^{\varepsilon} t^{d-1} e^{-\alpha\I \,t}\, dt}{1/\I}
        = \lim_{\I\to \infty} \int_{0}^{\alpha\I\varepsilon} z^{d-1} e^{-z} \, \dint z 
        = (d-1)!
\end{align*}
Combining all obtained bounds together with \eqref{eq:Eq3} and \eqref{eq:Eq4} yields
\begin{align*}
    \liminf_{\I\to \infty} \frac{\mathbb{E}f_{d-1}(K_{\eta_\I}^{(s,2)})}{\log\I} 
    &\geq (1-O_d(\varepsilon)) \omega_{d-1}\frac{B_d}{db_d^d},\\
    \intertext{and}
    \limsup_{\I\to\infty} \frac{\mathbb{E}f_{d-1}(K_{\eta_\I}^{(s,2)})}{\log\I} 
    &\leq (1+O_d(\varepsilon)) \omega_{d-1} \frac{B_d}{db_d^d}.
\end{align*}
Since this is true for arbitrarily small $\varepsilon\in (0,\varepsilon_0)$ we finally derive that
\begin{equation*}
    \mathbb{E}f_{d-1}(K_{\eta_\I}^{(s,2)}) = \frac{2^{d-1} A_d \omega_{d-1}}{d} \, (\log \I) (1+o_{d}(1)), \qquad \text{as $\I\to\infty$}.
\end{equation*}
This concludes the proof of Theorem \ref{thm:main} for the Poisson model.

\section{Proof of Theorem \ref{thm:main}, Step 4: Conclusion for the binomial model}\label{sec:ProofStep4}

Let us now consider the binomial model $K_n^{(s,2)}$. We have that
\begin{align*}
    \EE f_{d-1} (K_n^{(s,2)}) 
    &= \frac{1}{d!} \EE \sum_{1\leq i_1<\dotsc<i_d\leq n} 
        \mathbf{1}\{\bx_{i_1},\dotsc,\bx_{i_d} \text{ generate a facet of }K_n^{(s,2)}\}\\
    &= \binom{n}{d} \int_{\mathbb{S}_{2,+}^d} \cdots \int_{\mathbb{S}_{2,+}^d} 
        \PP(\bx_1,\dotsc,\bx_d\text{  generate a facet of }K_n^{(s,2)}) \,
        \frac{\sigma_d(\dint\bx_1)}{\sigma_d(\mathbb{S}_{2,+}^d)}\ldots\frac{\sigma_d(\dint\bx_d)}{\sigma_d(\mathbb{S}_{2,+}^d)},
\end{align*}
and by applying the spherical Blaschke-Petkantschin formula as in in Step 1 in Section \ref{sec:ProofStep1}, we see that
\begin{align*}
    \EE f_{d-1} (K_n^{(s,2)})
    &= \frac{\omega_{d+1}}{2\sigma_d(\mathbb{S}_{2,+}^d)^d} \binom{n}{d} \int_{G(d+1,d)} \\
    &\qquad \times \left[
        \int_{\mathbb{S}_{2,+}^d\cap H} \cdots \int_{\mathbb{S}_{2,+}^d\cap H}
        \nabla_d(\bx_1,\dotsc,\bx_d) \, \sigma_{d-1}(\dint\bx_1)\dotsc \sigma_{d-1}(\dint\bx_d)\right]\\
    &\qquad \times \left[\left( \frac{\sigma_{d}(\mathbb{S}_{2,+}^d\cap H^+)}{\sigma_{d}(\mathbb{S}_{2,+}^d)}\right)^{n-d} + 
        \left( \frac{\sigma_{d}(\mathbb{S}_{2,+}^d\cap H^-)}{\sigma_{d}(\mathbb{S}_{2,+}^d)}\right)^{n-d}\right] \,\nu_d(\dint H)\\
    &= \frac{1}{\sigma_d(\mathbb{S}_{2,+}^d)^d} \binom{n}{d} \int_{\mathbb{S}^d}
        I_1(\bz) \left(1-\frac{I_2(\bz)}{\sigma_d(\mathbb{S}_{2,+}^d)}\right)^{n-d} \, \sigma_d(\dint\bz).
\end{align*}
Using the same parametrization as before we conclude further that
\begin{align*}
    \EE f_{d-1} (K_n^{(s,2)})
    &= \frac{1}{\sigma_d(\mathbb{S}_{2,+}^d)^d} \binom{n}{d} 
        \int_{\mathbb{S}^{d-2}}\int_{0}^\pi \int_{0}^{\pi/2} I_1(Z(\phi,\psi,\bu))\\
    &\qquad \times  \left[\left(1-\frac{I_2(Z(\phi,\psi,\bu))}{\sigma_d(\mathbb{S}_{2,+}^d)}\right)^{n-d} + \left(\frac{I_2(Z(\phi,\psi,\bu))}{\sigma_d(\mathbb{S}_{2,+}^d)}\right)^{n-d}\right]\\
    &\qquad \times (\sin \phi)^{d-2} (\sin\psi)^{d-1} \, \dint \psi\, \dint \phi\, \sigma_{d-2}(\dint\bu).
\end{align*}
With the same bounds on $I_1$ and $I_2$ as developed in Step 2 in Section \ref{sec:ProofStep2} we obtain
\begin{align*}
    \EE f_{d-1}(K_n^{(s,2)}) 
    &\geq (1-O_d(\varepsilon)) \omega_{d-1} B_d \frac{2}{\sigma_d(\mathbb{S}_{2,+}^d)^d} \binom{n}{d}
        \int_{0}^\varepsilon \int_{0}^\varepsilon\!\!\phi^{2d-1} \psi^{d-1}\!\! \left(1- \frac{(1+\Theta\varepsilon)b_d}{\sigma_d(\mathbb{S}_{2,+}^d)} \phi^2\psi\right)^{n-d} 
            \!\!\!\!\!\, \dint \psi\, \dint\phi \\
    &\geq (1-O_d(\varepsilon)) \omega_{d-1} \frac{B_d}{d! b_d^d} \frac{d!}{n^d}\binom{n}{d} 
        \int_{0}^{n\frac{(1+\Theta\varepsilon)b_d}{\sigma_d(\mathbb{S}_{2,+}^d)}\varepsilon^2} 
            \int_{0}^\varepsilon s^{d-1} t^{d-1} \left(1-\frac{st}{n}\right)^{n-d} \, \dint t\, \dint s.
\end{align*}
We note that
\begin{equation*}
    \lim_{n \to \infty} \frac{d!}{n^d}\binom{n}{d} = 1,
\end{equation*}
and define
\begin{equation*}
    H(\alpha,n) 
    := \int_{0}^{n\alpha} \int_{0}^\varepsilon 
        s^{d-1} t^{d-1} \left(1-\frac{st}{n}\right)^{n-d}\, \dint t\,\dint s.
\end{equation*}
Then
\begin{equation*}
    \lim_{n\to \infty} \frac{H(\alpha,n)}{\log n} = (d-1)!,
\end{equation*}
see, for example, \cite[Lem.\ on p.\ 296]{AW91}.
Thus,
\begin{equation*}
    \liminf_{n\to \infty} \frac{\EE f_{d-1}(K_n^{(s,2)})}{\log n} 
    \geq (1-O_d(\varepsilon)) \omega_{d-1} \frac{B_d}{d b_d^d}.
\end{equation*}
The upper bound 
\begin{equation*}
    \limsup_{n\to\infty} \frac{\EE f_{d-1}(K_n^{(s,2)})}{\log n} 
    \leq (1+O_d(\varepsilon)) \omega_{d-1} \frac{B_d}{d b_d^d},
\end{equation*}
can be obtained in a similar fashion. Finally we conclude that
\begin{equation*}
    \EE f_{d-1}(K_n^{(s,2)}) = \frac{2^{d-1}A_d \omega_{d-1}}{d}\, (\log n) (1+o_d(1)),\qquad \text{as $n\to\infty$}.
\end{equation*}
This finishes the proof of Theorem \ref{thm:main}.\qed

\appendix

\section{Analytic Estimates}\label{sec:calc_f}

\subsection{Estimate for Step 2}

We aim to show that there exists $\varepsilon>0$ such that
\begin{align*}
    \frac{1}{2} - \varepsilon \leq \frac{x-\arcsin(\sin(x)\cos(y))}{xy^2}\leq \frac{1}{2} + \varepsilon \qquad \text{for all } x,y\in(0,\varepsilon).
\end{align*}
To prove that this is indeed the case we first recall that for $z\in(-1,1)$ we have
\begin{align*}
    \arcsin z 
    &= \sum_{k=0}^\infty \frac{(2k-1)!!}{(2k)!!} \frac{z^{2k+1}}{2k+1} 
    = z +\frac{1}{6}z^3 + \frac{3}{40} z^5 + \frac{5}{112} z^7 + ...\\
\intertext{and for $x,y\in\mathbb{R}$ we find}
    \sin(x)\cos(y) 
    &= \frac{\sin(x+y)+\sin(x-y)}{2} 
    = \frac{1}{2} \sum_{k=0}^\infty \frac{(-1)^k}{(2k+1)!} 
        \sum_{m=0}^{2k+1} \binom{2k+1}{m} x^{2k+1-m}y^{m} (1+(-1)^m)\\
    &= \sum_{k=0}^\infty \frac{(-1)^k}{(2k+1)!} \sum_{m=0}^k \binom{2k+1}{2m} x^{2(k-m)+1} y^{2m}\\
    &= \sin(x) - xy^2 \sum_{k=0}^\infty \frac{(-1)^k}{(2k+3)!} \sum_{m=0}^{k} \binom{2k+3}{2m+2} x^{2(k-m)} y^{2m}\\
    &= \sin(x) - \frac{xy^2}{2} (1+G(x,y)),
\end{align*}
where
\begin{equation*}
    G(x,y) 
    = -2\sum_{k=0}^\infty \frac{(-1)^k}{(2k+5)!} \sum_{m=0}^{k+1} \binom{2k+5}{2m+2} x^{2(k+1-m)} y^{2m}
    =-\frac{2x^2+y^2}{12}+o(\|(x,y)\|^2) = o(\|(x,y)\|),
\end{equation*}
as $(x,y)\to 0$.
Thus,
\begin{align*}
    &\arcsin(\sin(x)\cos(y))\\ 
    &\quad= \sum_{k=0}^\infty \frac{(2k-1)!!}{(2k)!!} \frac{1}{2k+1} 
        \sum_{m=0}^{2k+1} \binom{2k+1}{m} \sin(x)^{2k+1-m} \left(-\frac{xy^2}{2} (1+G)\right)^{m}\\
    &\quad= \arcsin(\sin(x)) + \sum_{k=0}^\infty \frac{(2k-1)!!}{(2k)!!} \frac{1}{2k+1}
        \sum_{m=1}^{2k+1} \binom{2k+1}{m} \sin(x)^{2k+1-m} \left(-\frac{xy^2}{2} (1+G)\right)^{m}\\
    &\quad= x - \frac{xy^2}{2} (1+G) \left(1+\sum_{k=0}^\infty\frac{(2k+1)!!}{(2k+2)!!} \frac{1}{2k+3}
        \sum_{m=0}^{2k+3} \binom{2k+3}{m+1} \sin(x)^{2k+2-m} \left(-\frac{xy^2}{2} (1+G)\right)^{m}\right)\\
    &\quad= x- \frac{xy^2}{2} (1+o(\|(x,y)\|)).
\end{align*}
So, for
\begin{equation*}
    f(x,y):=\frac{x-\arcsin(\sin(x)\cos(y))}{xy^2} = \frac{1}{2} + o(\|(x,y)\|)\qquad \text{as $x,y\to 0$},
\end{equation*}
the total derivative of $f$ at $(x,y)=(0,0)$ exists and is given by
\begin{equation*}
    \dint f(0,0) = \lim_{(x,y)\to (0,0)} \frac{f(x,y)-\frac{1}{2}}{\|(x,y)\|} = 0.
\end{equation*}
In particular this implies that there exists $\varepsilon>0$ such that 
\begin{equation*}
    \frac{1}{2} - \varepsilon \leq f(x,y) \leq \frac{1}{2} + \varepsilon \qquad \text{for all } x,y\in (0,\varepsilon).
\end{equation*}

\medskip
Next, we show that 
\begin{equation*}
    f(x,y)\geq \frac{2}{\pi^2} \quad \text{for all $x\in[0,\pi/2]$ and $y\in[0,\pi]$.}
\end{equation*}
Let us fix $y_0\in(0,\pi)$ and consider function $g_{y_0}(x):=f(x,y_0)$. It is clear, that for $y_0=\pi/2$ we have $g_{y_0}(x)=4/\pi^2$ for any $x\in(0,\pi/2)$. In the next step we will show, that for $y_0\in (0,\pi/2)$ function $g_{y_0}(x)$ is strictly increasing and for $y_0\in(\pi/2,\pi)$ function $g_{y_0}(x)$ is strictly decreasing on the interval $x\in (0,\pi/2)$. 

Let us start with the case $y_0\in(0,\pi/2)$. Consider the derivative
\begin{equation*}
    g_{y_0}'(x)=-\frac{1}{x^2y_0^2}\Big(\frac{x\cos(x)\cos(y_0)}{\sqrt{1-\sin(x)^2\cos(y_0)^2}}-\arcsin(\sin(x)\cos(y_0))\Big).
\end{equation*}
It is clear, that $g_{y_0}(x)$ is strictly increasing on the interval $x\in (0,\pi/2)$ if and only if $g_{y_0}'(x)>0$ on this interval, which is equivalent to
\begin{equation*}
    \frac{x\cos(x)\cos(y_0)}{\sqrt{1-\sin(x)^2\cos(y_0)^2}}<\arcsin(\sin(x)\cos(y_0)).
\end{equation*}
Consider a change of variables $s:=\sin(x)\in(0,1)$, $t_0:=\cos(y_0)\in(0,1)$. Then the inequality above is equivalent to
\begin{equation}\label{eqn:minimum1}
    t_0\arcsin(s)\sqrt{1-s^2}<\arcsin(st_0)\sqrt{1-s^2t_0^2}.
\end{equation}
Denote by $r(z):=\arcsin(z)\sqrt{1-z^2}$ and by $q(t_0):=r(t_0s)-t_0r(s)$. Then \eqref{eqn:minimum1} is equivalent to $q(t_0)>0$ for $t_0\in(0,1)$. 
It is easy to verify that $q(0)=q(1)=0$ and
\begin{equation*}
    q''(t_0)=s^2r''(t_0s)<0,\qquad s,t_0\in(0,1),
\end{equation*}
since $r(z)$ is strictly concave on the interval $(0,1)$. This finishes the proof of the first statement.

In the case $y_0\in(\pi/2,\pi)$ we apply the change of variables $\tilde{y}_0:=\pi-y_0\in(0,\pi)$ and note that
\begin{equation*}
    g_{y_0}'(x)=-g_{\tilde{y}_0}'(x)<0,
\end{equation*}
as follows from the previous calculations. 
Thus,
\begin{equation*}
    \inf\limits_{x\in(0,\pi/2),y\in(0,\pi)} f(x,y)
    = \min \Big(\inf\limits_{y\in(0,\pi/2)}f(0,y), \inf\limits_{y\in(\pi/2,\pi)}f\big(\frac{\pi}{2},y\big), \frac{4}{\pi^2}\Big).
\end{equation*}
We calculate 
\begin{align*}
    \inf\limits_{y\in(0,\pi/2)}f(0,y)
    &=\inf\limits_{y\in(0,\pi/2)}\lim_{x\to 0}f(x,y)
    =\inf\limits_{y\in(0,\pi/2)} \frac{1-\cos(y)}{y^2}
    \geq \frac{1}{2},
\intertext{and}
    \inf\limits_{y\in(\pi/2,\pi)}f\big(\frac{\pi}{2},y\big)
    &=\inf\limits_{y\in(\pi/2,\pi)} \frac{2}{y\pi}
    =\frac{2}{\pi^2},
\end{align*}
which implies that $f(x,y)\geq 2/ \pi^2$ in the rectangle $(x,y)\in [0,\pi/2]\times [0,\pi]$.

\medskip

\subsection{Proof of \texorpdfstring{\eqref{eqn:trivial_lower}}{(5.2)} in Step 3} \label{apx:proof_trivial_lower}

We show that
\begin{equation*}
    \pi -x +\arcsin(\sin(x)\cos(y)) \geq \frac{1}{3} \left((\pi/2-x)+(\pi-y)\right)
        \qquad \text{for all } x\in(0,\pi/2), y\in(0,\pi).
\end{equation*}
Using the qualities $\sin x=\cos(\pi/2-x)$, $\cos y=-\cos(\pi-y)$, $\arcsin(-x)=-\arcsin x$ and making a change of variables $x=\pi/2-x$ and $y=\pi-y$ we see, that the above inequality is equivalent to
\begin{equation*}
    x + \pi/2 - \arcsin(\cos(x)\cos(y)) \geq \frac{1}{3} (x+y),
        \qquad \text{for all } x\in(0,\pi/2), y\in(0,\pi).
\end{equation*}
We use the estimate $\pi/2-\arcsin z \geq \sqrt{1-z}$ for $z\in[-1,1]$, and $\cos z\leq 1-z^2/5$ for all $z\in[0,\pi]$, to find that
\begin{align*}
    x+\pi/2-\arcsin(\cos(x)\cos(y)) 
    &\geq x + \sqrt{1-(1-x^2/5)(1-y^2/5)}\\
    &= x + \frac{1}{5}\sqrt{5(x^2+y^2)-x^2y^2} 
    \geq \frac{x+y}{3},\quad \text{for all } x\in(0,\pi/2), y\in(0,\pi),
\end{align*}
where the last inequality is an exercise.

\subsubsection*{Acknowledgement}
The authors started this project within a working group that formed during the workshop \textit{New Perspectives and Computational Challenges in High Dimensions} at the Mathematisches Forschungsinstitut Oberwolfach (MFO) and continued to work during the virtual Trimester Program \textit{The Interplay between High Dimensional Geometry and Probability} at the Hausdorff Research Institute for Mathematics (HIM). All support is gratefully acknowledged.\\ CT has also been supported by the DFG priority program SPP 2265 \textit{Random Geometric Systems}. AG was supported by the DFG under Germany's Excellence Strategy  EXC 2044 -- 390685587, \textit{Mathematics M\"unster: Dynamics - Geometry - Structure}. EMW is supported by NSF grant  DMS-2103482 and by Simons Fellowship 678608.

\end{document}